\newtheorem{theorem}{Theorem}[section]
\newtheorem{lemma}[theorem]{Lemma}
\numberwithin{equation}{section}
\title[Linear FEM for flexural wave scattering]{Numerical solution of the cavity scattering problem for flexural waves on thin plates: linear finite element methods}
\author{Junhong Yue}
\address{College of Computer Science and Technology (College of Data Science), Taiyuan University of Technology, Shanxi 030024, China}
\email{yuejunhong@tyut.edu.cn}
\author{Peijun Li}
\address{Department of Mathematics, Purdue University, West Lafayette, Indiana 47907, USA}
\email{lipeijun@math.purdue.edu}
\thanks{The first author is supported by the NSFC grant 11901423. The second author is supported in part by the NSF grant DMS-2208256.}
\subjclass[2010]{65N30, 74J20, 74S05}
\keywords{Biharmonic wave equation, flexural wave scattering problem, transparent boundary condition, linear finite element method}
\date{}
\begin{document}
\maketitle

\begin{abstract}
Flexural wave scattering plays a crucial role in optimizing and designing structures for various engineering applications. Mathematically, the flexural wave scattering problem on an infinite thin plate is described by a fourth-order plate-wave equation on an unbounded domain, making it challenging to solve directly using the regular linear finite element method (FEM). In this paper, we propose two numerical methods, the interior penalty FEM (IP-FEM) and the boundary penalty FEM (BP-FEM) with a transparent boundary condition (TBC), to study flexural wave scattering by an arbitrary-shaped cavity on an infinite thin plate. Both methods decompose the fourth-order plate-wave equation into the Helmholtz and modified Helmholtz equations with coupled conditions at the cavity boundary. A TBC is then constructed based on the analytical solutions of the Helmholtz and modified Helmholtz equations in the exterior domain, effectively truncating the unbounded domain into a bounded one. Using linear triangular elements, the IP-FEM and BP-FEM successfully suppress the oscillation of the bending moment of the solution at the cavity boundary, demonstrating superior stability and accuracy compared to the regular linear FEM when applied to this problem.
\end{abstract}

\section{Introduction}

Flexural wave scattering is a widespread phenomenon with practical engineering applications, including the design of lightweight mechanical structures with low noise \cite{APM-16}, ultra-broadband elastic cloaking devices \cite{DZA-18, FGE-09}, Platonic diffraction gratings and arrays \cite{H-14, HCM-16}, massive floating concrete runways offshore, and health monitoring of thin-walled structures such as aircraft wings, oil tank walls, and pressure vessels \cite{V-07}. This phenomenon arises from the interaction of incident waves with different types of scatterers (e.g., voids, rigid, and elastic scatterers) on thin-wall structures \cite{LHH-03}, attracting considerable attention in the research community. For instance, Akrucci et al. \cite{APM-16} investigated the effect of acoustic black holes on flexural wave scattering on infinite thin plates, effectively reducing plate oscillations without increasing structure mass. Liu et al. \cite{LZ-19} realized a broadband cylindrical cloak for flexural waves in elastic thin plates using nonlinear transformation, guiding flexural waves more effectively outside the cloak region. Haslinger et al. \cite{HMM-17} studied scattering and transmission of flexural waves in a thin plate with a semi-infinite array of point scatterers, demonstrating dynamically anisotropic wave effects in semi-infinite platonic crystals. Evans et al. \cite{EP-07} explored flexural wave scattering in an elastic thin plate floating on water. Wang et al. \cite{WC-05} investigated the scattering behavior of extensional and flexural plate waves by a cylindrical inhomogeneity for structural health monitoring, characterizing the interaction of plate waves with structural damage. Consequently, studying flexural wave scattering in thin-walled structures holds great importance for optimizing and designing new structures.

Mathematically, the scattering problem of flexural waves on infinite thin plates can be described by a fourth-order plate-wave equation on an unbounded domain. Analytical solutions for such problems are only attainable for isotropic thin plates containing scatterers with simple geometries, posing challenges for complex geometries and media. Apparently, numerical methods are necessary to simulate and solve these problems. Several numerical approaches have been proposed for solving flexural wave scattering problems. For instance, Norris et al. \cite{NV-95} explored the energy flux conservation and the optical theorem in the context of flexural wave scattering, applying them to flexural scattering by circular scatterers. Matus et al. \cite{ME-10} employed the transfer matrix (T-matrix) method for flexural wave scattering by a single noncircular scatterer. Climente et al. \cite{CGW-17, CNS-15} utilized T-matrix and impedance matrix methods to study flexural wave scattering by a hole containing beam resonators in an infinite thin plate for vibration control. Lee et al. \cite{LC-10-1, LC-10-2, LC-11} investigated flexural wave scattering in a thin plate with multiple circular inclusions using the multipole method, the multipole Trefftz method, and the null-field integral equation approach. Cai et al. \cite{CH-16} applied the T-matrix method for the multiple scattering of flexural waves by different types of circular scatterers on thin plates. Wang et al. \cite{WLD-19} studied the multiple scattering of flexural waves by varying-thickness annular inclusions on infinite thin plates using a semi-analytical method. Wang et al. \cite{WL-21} developed a semi-analytical model for a novel plate-harvester system, combining multiple flexural scattering theories of thin plates with coupled electroelastic dynamics of piezoelectric composite beams. Dong et al. \cite{DL-23} proposed a novel formulation of boundary integral equations for the scattering of flexural waves by obstacles on infinite thin plates.

However, the finite element method (FEM), known for its efficiency and stability in numerical algorithms, is rarely employed to solve the scattering problem of flexural waves due to the challenges posed by the unbounded domain of the problem and the presence of high-order partial differential equations (PDEs). To address the unbounded domain issue, common truncated techniques, such as absorbing boundary condition (ABC) \cite{CK-83, EM-77}, perfectly matched layer (PML) \cite{B-94}, and transparent boundary condition (TBC) \cite{JLZ-13, JLLZ-17, LY-22}, are used to truncate the unbounded domain into bounded computational domains when applying FEM. However, these techniques are primarily constructed for second-order PDEs in exterior domains, such as acoustic, electromagnetic, and elastic wave scattering problems. Directly applying these techniques to flexural wave scattering problems, which involve a fourth-order wave equation, is challenging. Therefore, the development of an effective truncation technique specifically for flexural wave scattering is crucial. Moreover, to solve the fourth-order wave equation within the truncation domain, various numerical methods have been proposed, including meshless methods \cite{TFC-20} and different types of FEMs. In the literature, classical FEM approaches include $C^1$ conforming FEMs like the Argyris element \cite{C-02} with 21 degrees of freedom, as well as nonconforming FEMs like the Adini element \cite{HS-13} and Morley element \cite{WX-06}. However, these elements are rarely practical due to either their high number of degrees of freedom or the complexity involved in their implementation. It remains a challenge to find a simpler and more efficient approach to solve the fourth-order wave equation in the truncation domain.

As a result, numerous FEMs based on linear triangular elements have been proposed to solve fourth-order problems, including mixed methods \cite{AD-01, BG-11, BF-12, GP-78, GNP-08} and the recovery-based linear FEM \cite{CZZ-16, GZZ-18, HWY-20, L-14}. While mixed methods are widely used, they require careful treatment of essential and natural boundary conditions. In particular, the Ciarlet--Raviart mixed FEM exhibits oscillation behavior of the middle variable on the boundary, necessitating the addition of corresponding penalty terms \cite{AD-01}. On the other hand, the recovery-based linear FEM is a nonconforming FEM that discretizes the Laplace operator by utilizing the gradient recovery operator acting on the gradient of the $C^0$ linear element.

This paper first reduces the biharmonic plate wave equation to the Helmholtz and modified Helmholtz equations with coupled conditions at the cavity boundary using two auxiliary functions. To truncate the unbounded domain into a bounded one, the transparent boundary conditions (TBCs) are constructed using Fourier series solutions of the Helmholtz and modified Helmholtz equations, satisfying the Sommerfeld radiation conditions. The finite element approximation with linear triangular elements \cite{CMS-10} is then utilized to solve the decomposed problem with coupled boundary conditions. However, the solutions obtained by this method exhibit oscillatory behavior at the cavity boundary. Motivated by recent works \cite{AD-01, DW-15, LZZ-20, W-14}, we introduce the interior penalty term or the boundary penalty term to the variational formulations of the Helmholtz and modified Helmholtz equations, effectively suppressing the oscillation of the bending moment of the solution at the cavity boundary.

This paper focuses on the numerical computation of flexural wave scattering by an arbitrary shaped cavity with the clamped boundary and contributes in the following four aspects:

\begin{enumerate}

 \item Construction of TBCs for flexural wave cavity scattering in two dimensions, which is equivalent to satisfying the Sommerfeld radiation conditions.

 \item Deduction of a decomposed problem of the biharmonic plate wave equation by introducing two auxiliary functions, and proof of the uniqueness of its solution.

 \item Proposal of the linear finite element method with interior penalty term (IP-FEM) and boundary penalty term (BP-FEM) for solving the coupled boundary Helmholtz and modified Helmholtz equations, providing stable numerical solutions.

 \item Construction of an analytical solution for flexural wave scattering by a circular cavity with a clamped boundary, facilitating comparative analysis.

\end{enumerate}

In this work, we propose the IP-FEM and the BP-FEM as numerical methods to simulate cavity scattering in an infinite thin plate. The paper is outlined as follows. Section 2 describes the cavity scattering problems using the biharmonic plate wave equation with the clamped boundary condition. In Section 3, we construct the transparent boundary conditions (TBCs) to truncate the unbounded domain for plate wave scattering. Section 4 presents a decomposed problem by introducing two auxiliary functions for the plate wave equation, along with the proof of the uniqueness of this decomposition problem. In Section 5, we provide the variational formulations and discretized systems of the IP-FEM and the BP-FEM. Section 6 showcases numerical experiments to validate the effectiveness of the proposed methods, comparing them with analytical solutions or reference solutions. Finally, in Section 7, we draw conclusions from this study.

\section{Problem formulation}

Let us consider a cavity denoted as $D$, located within an infinite thin plate, with a Lipschitz continuous boundary $\partial D$. The structure is illuminated by a time-harmonic plane wave represented by $u^{\rm inc}(\boldsymbol x)=e^{{\rm i}\kappa \boldsymbol{x}\cdot \boldsymbol{d}}$, where $\kappa >0$ is the wavenumber determined by $\kappa^4 = \omega^2\rho h/D_c$. Here, $\omega$ denotes the angular frequency, while $\rho$, $h$, and $D_c$ refer to the mass density, thickness, and flexural rigidity of the plate, respectively. The incident direction is given by $\boldsymbol{d} = (\cos(\alpha),\sin(\alpha))$, with $\alpha \in [0,2\pi)$ representing the incident angle.

It can be verified that the incident wave field $u^{\rm inc}$ satisfies the two-dimensional biharmonic wave equation given by:
\begin{equation}\label{GE_Incident}
\Delta^2 u^{\rm inc} - \kappa^4 u^{\rm inc} = 0\quad \text{in} ~\mathbb R^2.
\end{equation}
The out-of-plane displacement of the plate, denoted as $u$, satisfies the two-dimensional biharmonic wave equation in the exterior of $D$, which is expressed as:
\begin{equation} \label{GE_Total}
\Delta^2 u - \kappa^4 u = 0 \quad \text{in} ~ \mathbb{R}^2\setminus \overline{D}.
\end{equation}
The total field $u$ is assumed to satisfy the following clamped boundary condition on $\partial D$:
\begin{equation} \label{BC_Total}
u = 0,\quad \partial_\nu u =0,
\end{equation}
where $\nu$ is the unit normal vector on $\partial D$.

It can be observed from \eqref{GE_Incident}--\eqref{BC_Total} that the scattered field $v = u-u^{\rm inc}$ satisfies:
\begin{equation} \label{GE_scattered}
\Delta^2 v - \kappa^4 v = 0 \quad \text{in} ~ \mathbb{R}^2\setminus \overline{D},
\end{equation}
and the following boundary conditions on $\partial D$:
\begin{equation} \label{BC1_scattered}
v = -u^{\rm inc},\quad \partial_\nu v = -\partial_\nu u^{\rm inc}.
\end{equation}
In addition, the scattered field $v$ and its Laplacian $\Delta v$ are required to satisfy the Sommerfeld radiation condition:
\begin{equation} \label{BC2_scattered}
\lim_{r\rightarrow \infty}r^{1/2}(\partial_r v-{\rm i}\kappa v) = 0,\quad
\lim_{r\rightarrow \infty}r^{1/2}(\partial_r \Delta v-{\rm i}\kappa \Delta v) = 0, \quad r = |x|.
\end{equation}

We introduce standard notations used in this paper. Let $B_R=\{\boldsymbol x\in\mathbb R^2: |\boldsymbol x|<R\}$ represent a disc with boundary $\Gamma_R=\{\boldsymbol x\in\mathbb R^2: |\boldsymbol x|=R\}$. The radius $R$ is chosen to be sufficiently large such that the cavity $D$ is completely contained within $B_R$. We denote $\Omega=B_R\setminus\overline D$. The inner product and the norm in $L^2(\Omega)$ are
\[
(\phi,\psi) = \int_{\Omega}\phi\bar\psi d\boldsymbol x,\quad \|\phi\|_{0,\Omega} = (\phi,\phi)^{1/2}.
\]
Define $H_{\partial D}^1 = \{\phi \in H^1(\Omega):\phi=0 ~\text{on} ~ \partial D\}$,
$H_{\Omega}^1 = \{\phi \in H^1(\Omega):\phi=0 ~\text{on} ~ \overline{\Omega}\backslash \partial D\}$, and
$H^1 = H_{\partial D}^1 \cup H_{\Omega}^1$. It is known that $H^{-1/2}(\Gamma_R)$ is the dual space of $H^{1/2}(\Gamma_R)$ with respect to the inner product
\[
\langle \phi,\psi\rangle_{\Gamma_R} = \int_{\Gamma_R}\phi\bar\psi ds.
\]

\section{Transparent boundary conditions}\label{sec3}

In this section, we introduce the transparent boundary condition (TBC) on $\Gamma_R$ to transform the cavity scattering problem from the open domain $\mathbb{R}^2\setminus\overline{D}$ to the bounded domain $\Omega$. This allows us to truncate the unbounded domain into a bounded one for numerical simulation and analysis.

Following \cite{DL-23}, we consider two auxiliary functions $v_H$ and $v_M$, defined as:
\begin{equation*}
 v_H=-\frac{1}{2\kappa^2}(\Delta v-\kappa^2 v),\quad v_M=\frac{1}{2\kappa^2}(\Delta v+\kappa^2 v).
\end{equation*}
It can be observed that the quantities $v$, $\Delta v$, $v_H$, and $v_M$ are related through the equations
\begin{equation}\label{Rel_HM_1}
v = v_H + v_M,\quad
\Delta v = \kappa^2 (-v_H+v_M),
\end{equation}
and
\begin{equation}\label{Rel_HM_2}
v_H = \frac{1}{2}(v - \kappa^{-2} \Delta v),\quad
v_M = \frac{1}{2}(v + \kappa^{-2} \Delta v).
\end{equation}

The biharmonic wave equation \eqref{GE_scattered} can be written as
\[
 (\Delta^2-\kappa^4) v = (\Delta-\kappa^2)(\Delta+\kappa^2)(v_H + v_M) = 0\quad\text{in} ~\mathbb R^2\setminus\overline D,
\]
which implies that $v_H$ and $v_M$ satisfy the Helmholtz equation and the modified Helmholtz equation, respectively:
\begin{equation} \label{HM_eq}
\Delta v_H + \kappa^2 v_H = 0,\quad \Delta v_M - \kappa^2 v_M = 0.
\end{equation}
Combining \eqref{BC2_scattered} and \eqref{HM_eq}, we deduce that the functions $v_H$ and $v_M$ satisfy the Sommerfeld radiation condition:
\begin{equation}\label{RC}
 \lim_{r\rightarrow \infty}r^{1/2}(\partial_r v_H-{\rm i}\kappa v_H) = 0,\quad
\lim_{r\rightarrow \infty}r^{1/2}(\partial_r v_M-{\rm i}\kappa v_M) = 0.
\end{equation}

From \eqref{HM_eq} and \eqref{RC}, we can conclude that $v_H$ and $v_M$ have the following Fourier series expansions in $\mathbb{R}^2\setminus\overline{B_R}$:
\begin{equation*}
v_H(r,\theta) = \sum_{n \in \mathbb{Z}} \frac{H_n^{(1)}(\kappa r)}{H_n^{(1)}(\kappa R)}v_H^{(n)}(R)e^{{\rm i} n\theta}, \quad
v_M(r,\theta) = \sum_{n \in \mathbb{Z}} \frac{K_n(\kappa r)}{K_n(\kappa R)}v_M^{(n)}(R)e^{{\rm i} n\theta},
\end{equation*}
where $H_n^{(1)}$ is the Hankel function of the first kind with order $n$, $K_n$ is the modified Bessel function with order $n$, $v_H^{(n)}$ and $v_M^{(n)}$ are the Fourier coefficients given by
\[
 v_H^{(n)}(R) = \frac{1}{2\pi}\int_0^{2\pi} v_H(R,\theta) e^{-{\rm i}n\theta} d\theta,\quad  v_M^{(n)}(R) = \frac{1}{2\pi}\int_0^{2\pi} v_M(R,\theta) e^{-{\rm i}n\theta} d\theta.
\]

For any function $\phi\in L^2(\Gamma_R)$ with the Fourier series expansion
\[
\phi(R,\theta) = \sum_{n \in \mathbb{Z}}\phi^{(n)}(R)e^{{\rm i}n \theta},\quad
\phi^{(n)}(R) = \frac{1}{2\pi}\int_0^{2\pi} \phi(R,\theta) e^{-{\rm i}n\theta} d\theta,
\]
we define two Dirichlet-to-Neumann (DtN) operators
\begin{align}\label{TBC_HM}
T_1 \phi = \frac{1}{R}\sum_{n \in \mathbb{Z}}h_n(\kappa R) \phi^{(n)}(R)e^{{\rm i} n\theta},\quad
T_2 \phi = \frac{1}{R}\sum_{n \in \mathbb{Z}}k_n(\kappa R) \phi^{(n)}(R)e^{{\rm i} n\theta},
\end{align}
where
\[
h_n(z) = z \frac{H_n^{(1)'}(z)}{H_n^{(1)}(z)}, ~~~ k_n(z) = z \frac{K'_n(z)}{K_n(z)}.
\]
Using \eqref{TBC_HM}, we deduce the TBC on $\Gamma_R$:
\begin{equation}\label{NormalDer_HM}
\partial_r v_H = T_1 v_H,\quad  \partial_r v_M = T_2 v_M.
\end{equation}

Given that $v$ and $\Delta v$ are periodic functions with respect to $\theta$ on $\Gamma_R$, they can be represented by their Fourier series expansions:
\begin{align*}
v(R,\theta) = \sum_{n \in \mathbb{Z}}v^{(n)}(R)e^{{\rm i} n\theta},\quad \Delta v(R,\theta) = \sum_{n \in \mathbb{Z}}(\Delta v)^{(n)}(R)e^{{\rm i} n\theta}.
\end{align*}
By combining \eqref{NormalDer_HM} and \eqref{Rel_HM_1}--\eqref{Rel_HM_2}, we obtain the TBC for the scattered fields:
\begin{align*}
\partial_r v &= T_1 v_H + T_2 v_M\\
        &= \frac{1}{2}T_1(v-\kappa^{-2} \Delta v) + \frac{1}{2}T_2(v+\kappa^{-2}\Delta v)\\
        &= \frac{1}{2}(T_1+T_2)v + \frac{1}{2\kappa^2}(T_2-T_1)\Delta v
\end{align*}
and
\begin{align*}
\partial_r \Delta v &= \kappa^2 (-T_1 v_H + T_2 v_M)\\
        &= \kappa^2 (-\frac{1}{2}T_1(v-\kappa^{-2}\Delta v)
          + \frac{1}{2}T_2(v+\kappa^{-2}\Delta v))\\
        &= \frac{\kappa^2}{2}(T_2-T_1)v + \frac{1}{2}(T_1+T_2)\Delta v.
\end{align*}
Using the above equations, we deduce the TBC for the total fields:
\begin{equation}\label{TBC_Total_1}
\partial_r u = \frac{1}{2} (T_1+T_2)u + \frac{1}{2\kappa^2}(T_2-T_1)\Delta u+ g_1
\end{equation}
and
\begin{equation}\label{TBC_Total_2}
\partial_r \Delta u = \frac{\kappa^2}{2}(T_2-T_1)u + \frac{1}{2}(T_1+T_2)\Delta u + g_2,
\end{equation}
where
\[
g_1 =\partial_r u^{\rm inc} -T_1 u^{\rm inc}, \quad g_2=-\kappa^2 g_1.
\]

\section{The decomposed problem}\label{sec4}

In this section, we present a decomposed formulation for the cavity scattering problem, consisting of \eqref{GE_Total}--\eqref{BC_Total} and \eqref{TBC_Total_1}--\eqref{TBC_Total_2}.

Let us consider two auxiliary functions defined as follows:
\begin{align} \label{Aux_Fun1}
p=\frac{1}{2\kappa^2}(\Delta u -\kappa^2 u),\quad q = \frac{1}{2\kappa^2}(\Delta u + \kappa^2 u).
\end{align}
It is evident that $p$ satisfies the Helmholtz equation, while $q$ satisfies the modified Helmholtz equation. Moreover, it can be verified that
\begin{align} \label{Relate_pq_uw}
u = q-p,\quad \Delta u = \kappa^2(p+q),
\end{align}
and
\begin{align} \label{Aux_Fun2}
p = -u_H,\quad q = u_M,
\end{align}
where $u_H$ and $u_M$ are the Helmholtz and modified Helmholtz wave components of $u$, respectively.

Using \eqref{Aux_Fun1}--\eqref{Aux_Fun2}, we obtain the following boundary value problem for $p$ and $q$:
\begin{equation} \label{PQ_bvp_Total}
\left\{
\begin{aligned}
& \Delta p + \kappa^2 p = 0,\quad  \Delta q - \kappa^2 q = 0 & \text{in} ~ \Omega,\\
& p-q = 0,\quad \partial_\nu p -\partial_\nu q = 0 & \text{on} ~ \partial D,\\
& \partial_r p = T_1 p -g_1,\quad \partial_r q = T_2 q & \text{on} ~ \Gamma_R.
\end{aligned}
\right.
\end{equation}

Equivalently, we may consider two auxiliary functions for the scattered field:
\[
p^s = \frac{1}{2\kappa^2}(\Delta v -\kappa^2 v), \quad q^s = \frac{1}{2\kappa^2}(\Delta v + \kappa^2 v).
\]
Hence, we have
\[
v = q^s-p^s, \quad \Delta v = \kappa^2(p^s+q^s),
\]
and
\[
p^s = -v_H, \quad q^s = v_M.
\]
It can be verified that $p^s$ and $q^s$ satisfy the following boundary value problem:
\begin{equation} \label{PQ_bvp_Scattered}
\left\{
\begin{aligned}
& \Delta p^s + \kappa^2 p^s = 0,\quad  \Delta q^s - \kappa^2 q^s = 0 & \text{in} ~ \Omega,\\
& p^s-q^s = u^{\rm inc},\quad \partial_\nu p^s -\partial_\nu q^s = \partial_\nu u^{\rm inc} & \text{on} ~ \partial D,\\
& \partial_r p^s = T_1 p^s,\quad \partial_r q^s = T_2 q^s & \text{on} ~ \Gamma_R.
\end{aligned}
\right.
\end{equation}

\begin{lemma}\label{Lemma1}
Let $z$ be a positive real number. Then
\begin{align*}
\Re (h_n(z))<0,\quad \Im (h_n(z)) >0, \quad
\Re(k_n(z))< 0,\quad  \Im (k_n(z))=0.
\end{align*}
\end{lemma}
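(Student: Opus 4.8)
The plan is to treat the two functions separately, since $k_n$ reduces to elementary monotonicity of a real quotient while $h_n$ requires a genuine special-function input. I would begin with $k_n$, the easy case. For real $z>0$ the modified Bessel function $K_n$ is real-valued, hence $K_n'$ is real and the quotient $k_n(z)=zK_n'(z)/K_n(z)$ is real; this gives $\Im(k_n(z))=0$ at once. Since $K_n(z)>0$ and $K_n$ is strictly decreasing on $(0,\infty)$ — equivalently $K_n'(z)=-\tfrac12\bigl(K_{n-1}(z)+K_{n+1}(z)\bigr)<0$ — the quotient is negative, so $\Re(k_n(z))<0$. Using $K_{-n}=K_n$ I may assume $n\ge 0$ throughout.

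For $h_n$ I would write $H_n^{(1)}=J_n+{\rm i}\,Y_n$ with $J_n,Y_n$ real on $(0,\infty)$ and rationalize the quotient:
\[
h_n(z)=z\,\frac{(J_n'+{\rm i}\,Y_n')(J_n-{\rm i}\,Y_n)}{J_n^2+Y_n^2}
=\frac{z}{M_n(z)}\Big[(J_nJ_n'+Y_nY_n')+{\rm i}\,(J_nY_n'-J_n'Y_n)\Big],
\]
where $M_n(z):=J_n^2(z)+Y_n^2(z)=|H_n^{(1)}(z)|^2>0$. The imaginary part is governed entirely by the Wronskian identity $J_nY_n'-J_n'Y_n=2/(\pi z)$, which yields immediately
\[
\Im(h_n(z))=\frac{2}{\pi M_n(z)}>0 .
\]
For the real part I would observe that $J_nJ_n'+Y_nY_n'=\tfrac12 M_n'(z)$, so that
\[
\Re(h_n(z))=\frac{z\,M_n'(z)}{2\,M_n(z)},
\]
and since $z>0$ and $M_n(z)>0$, the desired inequality $\Re(h_n(z))<0$ is equivalent to the strict monotonicity statement $M_n'(z)<0$ on $(0,\infty)$.

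This monotonicity is the crux of the argument and the step I expect to be the main obstacle, since it cannot be extracted from the Bessel recurrences alone. I would establish it from Nicholson's integral formula
\[
M_n(z)=J_n^2(z)+Y_n^2(z)=\frac{8}{\pi^2}\int_0^\infty K_0(2z\sinh t)\cosh(2nt)\,dt .
\]
Differentiating under the integral sign and using $K_0'=-K_1<0$ gives
\[
M_n'(z)=-\frac{16}{\pi^2}\int_0^\infty K_1(2z\sinh t)\,\sinh t\,\cosh(2nt)\,dt<0,
\]
because every factor of the integrand is positive for $t>0$; the interchange of differentiation and integration is justified by the exponential decay of $K_0$ and $K_1$. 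This closes the estimate for $\Re(h_n(z))$ and, together with the identities above, proves all four assertions. As with $k_n$, the symmetries $J_{-n}=(-1)^nJ_n$ and $Y_{-n}=(-1)^nY_n$ reduce the general case to $n\ge 0$.
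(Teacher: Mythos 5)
Your proof is correct and follows essentially the same route as the paper's: rationalizing the quotient $h_n$, using the Wronskian to evaluate the imaginary part, invoking Nicholson's integral together with $K_0'=-K_1<0$ for the sign of the real part, and elementary positivity and monotonicity of $K_n$ for $k_n$. The only cosmetic difference is that you quote the Wronskian $J_nY_n'-J_n'Y_n=2/(\pi z)$ directly, whereas the paper reaches the same quantity via the derivative recurrences for $J_n'$ and $Y_n'$.
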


\begin{proof}
Using the definition $H_n^{(1)}(z)= J_n(z) + {\rm i} Y_n(z)$, we can express $h_n(z)$ into
\begin{align*}
h_n(z) = \frac{z}{|H_n^{(1)}(z)|^2}\left(J_n^{\prime}(z)+{\rm i}Y_n^{\prime}(z)\right)\left(J_n(z)-{\rm i}Y_n(z)\right),
\end{align*}
where the real-valued functions $J_n(z)$ and $Y_n(z)$ are the Bessel functions of the first kind and second kind with order $n$, respectively.

First, we consider the real part of $h_n(z)$:
\begin{align*}
\Re(h_n(z)) & = \frac{z}{|H_n^{(1)}(z)|^2}\left(J_n^{\prime}(z)J_n(z)+Y_n^{\prime}(z)Y_n(z)\right)\\
& =\frac{z}{2|H_n^{(1)}(z)|^2}\frac{d}{dz}\left(J_n(z)^2+Y_n(z)^2\right).
\end{align*}
By the Nicholson's integral \cite[$(10.9.30)$]{OLBC-10},
\[
J_n(z)^2+Y_n(z)^2=\frac{8}{\pi^2} \int_0^{\infty}\cosh(2nt)K_0(2z\sinh(t))dt,
\]
where $\sinh(t)$ and $\cosh(t)$ are the hyperbolic sine and hyperbolic cosine functions, respectively, we have
\[
\frac{d}{dz}\left(J_n(z)^2+Y_n(z)^2\right)=\frac{16}{\pi^2} \int_0^{\infty}\cosh(2nt)\sinh(t)K_0^{\prime}(2z\sinh(t))dt.
\]
Given that $\cosh(t)>0$ for $t \in \mathbb{R}$ and $\sinh(t)>0$ for $t>0$, along with the fact that $K_n(z)$ is positive and decreasing throughout the interval $0<z<\infty$ for $n\geq 0$ (cf. \cite[Section 10.37]{OLBC-10}), and $K_0^{\prime}(z) = -K_1(z)<0$, we can conclude that $\Re(h_n(z))<0$.

Second, we consider the imaginary part of $h_n(z)$:
\begin{align*}
\Im(h_n(z)) = \frac{z}{|H_n^{(1)}(z)|^2}\left(Y_n^{\prime}(z)J_n(z)-J_n^{\prime}(z)Y_n(z)\right).
\end{align*}
Using the identities
\[
2J_n^{\prime} = J_{n-1}(z)-J_{n+1}(z), \quad 2Y_n^{\prime}(z) = Y_{n-1}(z)-Y_{n+1}(z),
\]
and the Wronskian \cite{OLBC-10}:
\[
W\left\{J_n(z),Y_n(z)\right\}=J_{n+1}(z)Y_n(z)-J_n(z)Y_{n+1}(z)=2/(\pi z),
\]
we have
\begin{align*}
\Im(h_n(z))& = \frac{z}{|H_n^{(1)}(z)|^2}\left((Y_{n-1}(z)-Y_{n+1}(z))J_n(z)-(J_{n-1}(z)-J_{n+1}(z))Y_n(z)\right).\\
& = \frac{2}{\pi}\frac{1}{|H_n^{(1)}(z)|^2}>0.
\end{align*}

Next, we examine the properties of $\Re(k_n(z))$ and $\Im(k_n(z))$. Since $K_n(z)$ is a real-valued function, $K_n^{\prime}(z)$ is also real, implying that $\Im(k_n(z))=0$. For a given $n$ and $z>0$, $K_n(z)>0$, and $K_n(z)$ is monotonically decreasing with respect to $z$, i.e., $K'_n(z)<0$. Consequently, we have $\Re(k_n(z))<0$ for $z>0$.
\end{proof}

\begin{theorem}\label{Th1}
The coupled boundary value problem \eqref{PQ_bvp_Total} has at most one solution for $\kappa>0$.
\end{theorem}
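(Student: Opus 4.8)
The plan is to use linearity: it suffices to show that the \emph{homogeneous} version of \eqref{PQ_bvp_Total} (obtained by taking $g_1=0$, equivalently the difference of two solutions) admits only $p=q=0$. I would start from Green's first identity. Testing $\Delta p+\kappa^2 p=0$ against $\bar p$ and $\Delta q-\kappa^2 q=0$ against $\bar q$ over $\Omega$, whose boundary is $\partial\Omega=\Gamma_R\cup\partial D$, yields
\begin{align*}
-\int_\Omega|\nabla p|^2\,d\boldsymbol x + \kappa^2\int_\Omega|p|^2\,d\boldsymbol x + \int_{\Gamma_R}\partial_r p\,\bar p\,ds + \int_{\partial D}\partial_n p\,\bar p\,ds &= 0,\\
-\int_\Omega|\nabla q|^2\,d\boldsymbol x - \kappa^2\int_\Omega|q|^2\,d\boldsymbol x + \int_{\Gamma_R}\partial_r q\,\bar q\,ds + \int_{\partial D}\partial_n q\,\bar q\,ds &= 0,
\end{align*}
where $\partial_n$ is the unit normal on $\partial D$ pointing out of $\Omega$. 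On $\Gamma_R$ I would substitute the TBC $\partial_r p=T_1 p$, $\partial_r q=T_2 q$ and use the Fourier orthogonality together with the definition \eqref{TBC_HM} to diagonalize the boundary operators, turning the $\Gamma_R$-integrals into $2\pi\sum_{n\in\mathbb Z}h_n(\kappa R)|p^{(n)}(R)|^2$ and $2\pi\sum_{n\in\mathbb Z}k_n(\kappa R)|q^{(n)}(R)|^2$.

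The next step exploits the coupling. Since $p=q$ and $\partial_\nu p=\partial_\nu q$ on $\partial D$, the two boundary integrals over $\partial D$ are identical, so subtracting the two energy identities cancels them exactly. Taking the imaginary part of the resulting relation annihilates all the (real) volume integrals, and by Lemma~\ref{Lemma1}, which gives $\Im(k_n)=0$ and $\Im(h_n)>0$, what remains is $\sum_{n\in\mathbb Z}\Im(h_n(\kappa R))|p^{(n)}(R)|^2=0$. As each coefficient $\Im(h_n(\kappa R))$ is strictly positive, this forces $p^{(n)}(R)=0$ for every $n$, i.e. $p=0$ on $\Gamma_R$, and then the TBC gives $\partial_r p=T_1 p=0$ on $\Gamma_R$ as well.

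With vanishing Cauchy data for the Helmholtz solution $p$ on $\Gamma_R$, I would deduce $p\equiv0$ in $\Omega$ by extending $p$ by zero to the exterior $\{r>R\}$; the matching of $p$ and $\partial_r p$ across $\Gamma_R$ makes this extension a (locally $H^2$) global solution of the Helmholtz equation in $\mathbb R^2\setminus\overline D$, which is real-analytic and vanishes on the open set $\{r>R\}$, hence vanishes throughout the connected domain. The coupling on $\partial D$ then transfers this to $q$, giving $q=p=0$ and $\partial_\nu q=\partial_\nu p=0$ on $\partial D$. Feeding $q=0$ on $\partial D$ into the second energy identity kills its $\partial D$-term, leaving
\[
\int_\Omega|\nabla q|^2\,d\boldsymbol x + \kappa^2\int_\Omega|q|^2\,d\boldsymbol x - 2\pi\sum_{n\in\mathbb Z} k_n(\kappa R)|q^{(n)}(R)|^2 = 0,
\]
where $k_n(\kappa R)<0$ by Lemma~\ref{Lemma1}; every term is then nonnegative, so each must vanish and in particular $q\equiv0$ in $\Omega$, completing the argument.

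I expect the main obstacle to be the passage from ``$p$ has vanishing Cauchy data on $\Gamma_R$'' to ``$p\equiv0$ in $\Omega$'': the Rellich-type energy identity only controls $p$ on the artificial boundary, not in the interior, so a genuine unique-continuation (or analytic-continuation / Holmgren) argument is required, and one must check that $\mathbb R^2\setminus\overline D$ is connected and that the zero-extension across $\Gamma_R$ is an admissible weak solution. By contrast, the modified-Helmholtz part for $q$ is handled purely by coercivity once $q=0$ on $\partial D$ is known. The remaining ingredients—orientation of the normals in Green's identity, the Fourier diagonalization of $T_1,T_2$, and the sign information from Lemma~\ref{Lemma1}—are routine once the setup is fixed.
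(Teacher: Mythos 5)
Your proposal is correct and follows essentially the same route as the paper's proof: Green's identities for $p$ and $q$ over $\Omega$, cancellation of the $\partial D$ integrals via the coupling conditions, Fourier diagonalization of $T_1,T_2$ on $\Gamma_R$ using \eqref{TBC_HM}, and the imaginary-part argument with Lemma~\ref{Lemma1} to force $p^{(n)}(R)=0$ for all $n$, followed by unique continuation (your zero-extension/analyticity argument is the standard way to carry out the Holmgren step the paper invokes) to conclude $p\equiv 0$ in $\Omega$. The one place you genuinely diverge is the final step for $q$: the paper applies Holmgren's theorem a second time from the Cauchy data $q=\partial_\nu q=0$ on $\partial D$, whereas you use the coercivity of the modified Helmholtz energy identity --- all three terms $\int_\Omega|\nabla q|^2\,d\boldsymbol x$, $\kappa^2\int_\Omega|q|^2\,d\boldsymbol x$, and $-2\pi\sum_{n}k_n(\kappa R)|q^{(n)}(R)|^2$ are nonnegative since $k_n(\kappa R)<0$ --- which needs only $q=0$ on $\partial D$ and is the more elementary of the two; both arguments are valid.
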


\begin{proof}
It suffices to show that $p = 0$ and $q = 0$ in $\Omega$ when $g_1 = 0$.
Applying Green's theorem in $\Omega$ and the boundary condition, we obtain
\begin{align*}
& (\nabla p,\nabla p) - \kappa^2(p,p) -\langle T_1 p,p\rangle_{\Gamma_R}
 -\langle \partial_\nu p,p\rangle_{\partial D} = 0,\\
& (\nabla q,\nabla q) + \kappa^2(q,q) -\langle T_2 q,q\rangle_{\Gamma_R}
 -\langle \partial_\nu q,q\rangle_{\partial D} = 0.
\end{align*}
Since $\langle \partial_\nu p,p\rangle_{\partial D} = \langle \partial_\nu q,q\rangle_{\partial D}$ on $\partial D$, we have
\begin{align*}
& (\nabla p,\nabla p) - \kappa^2(p,p) -\langle T_1 p,p\rangle_{\Gamma_R}
 = (\nabla q,\nabla q) + \kappa^2(q,q) -\langle T_2 q,q\rangle_{\Gamma_R}.
\end{align*}
A simple calculation yields
\begin{align*}
\langle T_1 p,p\rangle_{\Gamma_R} = 2\pi\sum_{n\in \mathbb{Z}} h_n(\kappa R) |p^{(n)}|^2,\quad
\langle T_2 q,q\rangle_{\Gamma_R} = 2\pi\sum_{n\in \mathbb{Z}} k_n(\kappa R) |q^{(n)}|^2,
\end{align*}
where $p^{(n)}$ and $q^{(n)}$ are the Fourier coefficients of $q$ and $q$ on $\Gamma_R$.
Taking the imaginary part of the above equation gives
\begin{align*}
\Im \{-\langle T_1 p,p\rangle_{\Gamma_R} + \langle T_2 q,q\rangle_{\Gamma_R}\}
& = -2\pi\sum_{n\in \mathbb{Z}} \Im (h_n(\kappa R)) |p^{(n)}|^2 \\
&\quad + 2\pi\sum_{n\in \mathbb{Z}} \Im (k_n(\kappa R)) |q^{(n)}|^2 = 0,
\end{align*}
which gives $p^{(n)}=0$ for $n\in\mathbb{Z}$ using Lemma \ref{Lemma1}. Thus we have $p=0$ and $\partial_r p = 0$ on $\Gamma_R$. According to the Holmgren uniqueness theorem, we obtain $p=0$ and $\partial_\nu p = 0$ in $\mathbb{R}^2\setminus \overline{B_R}$. Furthermore, a unique continuation result implies that $p = 0$ and $\partial_\nu p = 0$ in $\Omega$. Considering the boundary conditions on $\partial D$, we find that $q=0$ and $\partial_\nu q = 0$ on $\partial D$. Consequently, by applying the Holmgren uniqueness theorem, we can deduce that $q=0$ in $\Omega$.
\end{proof}

To solve the decomposed problem \eqref{PQ_bvp_Total} by using the linear FEM, we introduce its variational formulation. Using the test functions ${\phi,\psi,\varphi}\in H_{\partial D}^1(\Omega) \times H_{\partial D}^1(\Omega)\times H_{\Omega}^1$, the weak formulation of \eqref{PQ_bvp_Total} aims to find ${p,q} \in H^1(\Omega) \times H^1(\Omega)$ that satisfy the following equations:
\begin{equation}\label{Var1_pq}
\left\{
\begin{aligned}
& b_1(p,\phi) = -\langle g_1,\phi\rangle_{\Gamma_R} &\forall ~ \phi \in H_{\partial D}^1(\Omega),\\
& b_2(q,\psi) = 0 &\forall ~ \psi \in H_{\partial D}^1(\Omega),\\
\end{aligned}
\right.
\end{equation}
and
\begin{equation}\label{Var2_pq}
(\nabla(p-q), \nabla\varphi) - \kappa^2((p+q),\varphi) = 0 \quad \forall ~ \varphi \in H_{\Omega}^1,
\end{equation}
where $p = p_0 + p_D$ and $q = q_0 + p_D$ with $p_0, q_0 \in H_{\partial D}^1(\Omega)$ and $p_D \in H_{\Omega}^1$.
Here the sesquilinears $b_1:H^1 \times H_{\partial D}^1\rightarrow \mathbb{C}$ and $b_2:H^1 \times H_{\partial D}^1\rightarrow \mathbb{C}$ are defined by
\begin{align*}
b_1(\phi, \psi) &= (\nabla \phi,\nabla \psi) -\kappa^2(\phi, \psi)
    -\langle T_1 \phi, \psi\rangle_{\Gamma_R},\\
 b_2(\phi, \psi) & = (\nabla\phi,\nabla\psi) + \kappa^2(\phi, \psi)
    -\langle T_2 \phi, \psi\rangle_{\Gamma_R}.
\end{align*}

\section{The linear finite element methods}\label{sec5}

In this section, we introduce the IP-FEM and BP-FEM methods for solving the problem \eqref{Var1_pq}--\eqref{Var2_pq}. First, we define the linear finite element spaces and the corresponding symbols. Next, we construct the variational formulations by incorporating an interior penalty term and a boundary penalty term, respectively. Finally, we present the discretized systems using the linear FEM.

\subsection{Finite element spaces}

Let $\mathcal M_h$ be a triangulation of $\Omega$ such that $\overline{\Omega} = \cup_{K \in\mathcal M_h}K$, where $K$ denotes a triangular element. Let $\mathcal{C}_h^I$ and $\mathcal{C}_h^B$ be the set of all interior and boundary edges of mesh $\mathcal M_h$, respectively.

We define the finite element space using piecewise linear functions, denoted as $\mathbb{P}_1$, associated with $\mathcal{M}_h$. We consider the following discrete spaces:
\begin{equation*}
S_h =\{\phi_h \in C(\overline{\Omega}): \phi_h|_K \in \mathbb{P}_1(K) ~ \forall K \in \mathcal{M}_h\},
\end{equation*}
where $S_h^0 = S_h \cap H_{\partial D}^1$ and $S_h^{\Omega} = S_h \cap H_{\Omega}^1$. Both $S_h^0$ and $S_h^{\Omega}$ are subspaces of $S_h$ that have vanishing degrees of freedom (DoFs) on $\partial D$ and $\Omega \cup \Gamma_R$, respectively.

\subsection{The variational formulation for IP-FEM}

We derive the variational formulation with an interior penalty term for the problem \eqref{Var1_pq}--\eqref{Var2_pq}. To facilitate the formulation, we assign a unique index $i_K \in \mathbb{N}$ to each element $K \in \mathcal{M}_h$. Furthermore, we define the jump of a function $\phi$ across an interior edge $e = \partial K \cap \partial K'$ as follows:
\begin{equation*}
[\phi]_e:=
\left\{
\begin{aligned}
& \phi|_K-\phi|_{K^{\prime}} & \text{if}~~ i_K > i_{K^{\prime}},\\
& \phi|_{K^{\prime}}-\phi|_K & \text{if}~~ i_K < i_{K^{\prime}}.
\end{aligned}
\right.
\end{equation*}
For any functions $\phi, \psi \in S_h$, we define the sesquilinear form of the interior Neumann penalty by
\begin{equation*}
J(\phi, \psi):=\sum_{e \in \mathcal C_h^I} \gamma_e h_e \langle [\partial_\nu \phi],[\partial_\nu \psi]\rangle_e,
\end{equation*}
where $h_e$ is the length of interior edge $e$ and $\gamma_e$ is a real positive parameter.

The sesquilinear forms $b^h_{1}:S_h \times S_h^0 \rightarrow \mathbb{C}$ and $b^h_{2}:S_h \times S_h^0 \rightarrow \mathbb{C}$ are defined by
\begin{align*}
b^h_{1}(\phi, \psi)= b_1(\phi, \psi) - J(\phi, \psi) ,\quad
b^h_{2}(\phi, \psi)= b_2(\phi, \psi) + J(\phi, \psi) .
\end{align*}

It is important to note that the sign of the penalty term should be consistent with that of the lower-order term (i.e., the mass matrix term) in the variational formulation. This consistency ensures enhanced stability of the solution for discrete systems constructed using linear finite elements, from a numerical computational perspective.

The variational formulation with an interior penalty term for problem \eqref{Var1_pq}--\eqref{Var2_pq} is to find
$\{p_h,q_h\} \in S_h \times S_h$ such that
\begin{equation}\label{var3_pq}
\left\{
\begin{aligned}
& b^h_{1}(p_h,\phi_h) = -\langle g_1,\phi_h\rangle_{\Gamma_R}&\forall ~ \phi_h \in S_h^0,\\
& b^h_{2}(q_h,\psi_h) = 0&\forall ~ \psi_h \in S_h^0,
\end{aligned}
\right.
\end{equation}
and
\begin{equation}\label{var4_pq}
(\nabla(p_h-q_h), \nabla\varphi_h) - \kappa^2(p_h+q_h, \varphi_h)
- J(p_h+q_h, \varphi_h) = 0 \quad \forall ~\varphi_h \in S_h^{\Omega},
\end{equation}
where $p_h = p^h_{0} + p^h_{D}$ and $q_h = q^h_{0} + p^h_{D}$ with $p^h_{0}, q^h_{0} \in S_h^0$ and $p^h_{D} \in S_h^{\Omega}$.

\subsection{The variational formulation for BP-FEM}

We now establish the variational formulation with a boundary penalty term for the problem \eqref{Var1_pq}--\eqref{Var2_pq}. Consider any functions $\phi, \psi \in S_h^{\Omega}$, the sesquilinear form of the boundary penalty term is defined as:
\begin{equation*}
G(\phi, \psi):=\sum_{e \in\mathcal C_h^B} \eta_e h_e \langle \partial_{\tau} \phi ,\partial_{\tau} \psi \rangle_e,
\end{equation*}
where $h_e$ represents the length of the boundary edge $e$, $\eta_e$ is a positive real parameter, and $\tau$ is the unit tangent vector on the boundary edge $e$.

The variational formulation with a boundary penalty term for problem \eqref{Var1_pq}--\eqref{Var2_pq} is defined as follows: find $\{p_h,q_h\} \in S_h \times S_h$ such that
\begin{equation}\label{var5_pq}
\left\{
\begin{aligned}
& b_1(p_h,\phi_h) = -\langle g_1,\phi_h\rangle_{\Gamma_R}& \forall ~ \phi_h \in S_h^0,\\
& b_2(q_h,\psi_h) = 0& \forall ~ \psi_h \in S_h^0,
\end{aligned}
\right.
\end{equation}
and
\begin{equation}\label{var6_pq}
(\nabla(p_h-q_h), \nabla\varphi_h) - \kappa^2(p_h+q_h,\varphi_h) - G(p^h_{D},\varphi_h)
= 0\quad \forall ~\varphi_h \in S_h^{\Omega},
\end{equation}
where $p_h = p^h_{0} + p^h_{D}$ and $q_h = q^h_{0} + p^h_{D}$ with $p^h_{0}, q^h_{0} \in S_h^0$ and $p^h_{D} \in S_h^{\Omega}$.

Similarly, it is essential to ensure the consistency of the sign of the penalty term $G(p^h_{D}, \varphi_h)$ with that of the lower-order term $(p_h+q_h,\varphi_h)$ in the variational formulation with a boundary penalty.

\subsection{The discretized problems}

Next, we proceed to discretize the variational problem with the interior penalty term \eqref{var3_pq}--\eqref{var4_pq} and the boundary penalty term \eqref{var5_pq}--\eqref{var6_pq} using linear FEM. Subsequently, we express these equations in matrix form.

Let $\{\alpha_j\}_{j=1}^{N_h^I}$ and $\{\beta_j\}_{j=1}^{N_h^T}$ be sets of bases in the space $S_h^0$. In the case of piecewise linear triangular elements, $N_h^I$ and $N_h^T$ correspond to the number of mesh nodes in the interior of $\Omega$ and on the boundary $\Gamma_R$, respectively. Let $\{\zeta_j\}_{j=1}^{N_h^D}$ represent the set of basis functions for the space $S_h^{\Omega}$, where $N_h^D$ denotes the number of mesh nodes on the boundary $\partial D$.

The discretized formulations of \eqref{var3_pq}--\eqref{var4_pq} and \eqref{var5_pq}--\eqref{var6_pq} for the IP-FEM and BP-FEM, using linear triangular elements, can be expressed as:
\begin{align}
\label{Discret_Mat_EQ1} \big(\mathbf{K} - \kappa^2 \mathbf{M} - \gamma \mathbf{K}_J - \mathbf{K}^{tb}\big)\mathbf{W} &= \mathbf{F},\\
\label{Discret_Mat_EQ2} \big(\mathbf{K} - \kappa^2 \mathbf{M} - \eta \mathbf{K}_G - \mathbf{K}^{tb}\big)\mathbf{W}& = \mathbf{F}.
\end{align}
In these equations, the penalty parameters $\gamma_e$ and $\eta_e$ are selected as $\gamma_e=\gamma$ for all interior edges and $\eta_e=\eta$ for all boundary edges, respectively. The unknown nodal vector $\mathbf{W}$ has a dimension of $2N_h^I + 2N_h^T + N_h^D$, given by
\begin{align*}
\mathbf{W} =
\begin{bmatrix}
\mathbf{W}_I \\
\mathbf{W}_T \\
\mathbf{P}_D
\end{bmatrix},\quad
\mathbf{W}_I =
\begin{bmatrix}
\mathbf{P}_I \\
\mathbf{Q}_I
\end{bmatrix},\quad
\mathbf{W}_T =
\begin{bmatrix}
\mathbf{P}_T \\
\mathbf{Q}_T
\end{bmatrix},
\end{align*}
where $\mathbf{P}_I$ and $\mathbf{Q}_I$ represent the values of $p$ and $q$ at the interior nodes, $\mathbf{P}_T$ and $\mathbf{Q}_T$ denote the values of $p$ and $q$ at the nodes on $\Gamma_R$, and $\mathbf{P}_D$ corresponds to the unknown nodal vector associated with the cavity boundary $\partial D$.

The stiffness matrix $\mathbf{K}$ and the mass matrix $\mathbf{M}$ are given in blockwise form as follows:
\begin{align*}
\mathbf{K} =
\begin{bmatrix}
\mathbf{K}_{II} & \mathbf{K}_{IT} & \mathbf{K}_{ID}\\
\mathbf{K}_{TI} & \mathbf{K}_{TT} & \mathbf{0}\\
\mathbf{K}_{DI} & \mathbf{0} & \mathbf{0}
\end{bmatrix}, \quad
\mathbf{M} =
\begin{bmatrix}
\mathbf{M}_{II} & \mathbf{M}_{IT} & \mathbf{M}_{ID}\\
\mathbf{M}_{TI} & \mathbf{M}_{TT} & \mathbf{0}\\
\mathbf{M}_{DI} & \mathbf{0} & 2\overline{\mathbf{M}}_{DD}
\end{bmatrix},
\end{align*}
where
\begin{align*}
& \mathbf{K}_{ij} =
\begin{bmatrix}
\overline{\mathbf{K}}_{ij} & \mathbf{0}\\
\mathbf{0} & \overline{\mathbf{K}}_{ij}
\end{bmatrix}, \quad
\mathbf{M}_{ij} =
\begin{bmatrix}
\overline{\mathbf{M}}_{ij} & \mathbf{0}\\
\mathbf{0} & -\overline{\mathbf{M}}_{ij}
\end{bmatrix},\quad i,j = I,T, \\
& \mathbf{K}_{ID} =
\begin{bmatrix}
\overline{\mathbf{K}}_{ID} \\
\overline{\mathbf{K}}_{ID}
\end{bmatrix},\quad
\mathbf{M}_{ID} =
\begin{bmatrix}
\overline{\mathbf{M}}_{ID} \\
-\overline{\mathbf{M}}_{ID}
\end{bmatrix}, \\
& \mathbf{K}_{DI} =
\begin{bmatrix}
\overline{\mathbf{K}}_{DI} & -\overline{\mathbf{K}}_{DI}
\end{bmatrix},\quad
\mathbf{M}_{DI} =
\begin{bmatrix}
\overline{\mathbf{M}}_{DI} & \overline{\mathbf{M}}_{DI}
\end{bmatrix}.
\end{align*}
Specifically, the stiffness and mass matrices associated with the IP-FEM and the BP-FEM are given in Table \ref{table_D_matrix}, where we have the relationships $\overline{\mathbf{K}}_{DI} = (\overline{\mathbf{K}}_{ID})^\top$,
$\overline{\mathbf{K}}_{TI} = (\overline{\mathbf{K}}_{IT})^\top$, $\overline{\mathbf{M}}_{DI} = (\overline{\mathbf{M}}_{ID})^\top$, and
$\overline{\mathbf{M}}_{TI} = (\overline{\mathbf{M}}_{IT})^\top$. Here $I, T$, and $D$ stand for the interior node in $\Omega$, the boundary node on $\Gamma_R$, and the boundary node on $\partial D$, respectively.

\begin{table}[ht]
\centering
\caption{The stiffness and mass matrices for the IP-FEM and BP-FEM.}
\vspace{-0.2cm}
\renewcommand\arraystretch{1.8}
\begin{tabular}{|m{1.8cm}<{\centering}|m{1.8cm}<{\centering}|m{12cm}<{\centering}|}
\hline
Matrix            & Dimension            & Matrix entries           \\
\hline
$\overline{\mathbf{K}}_{II}, \overline{\mathbf{M}}_{II}$ &
$N_h^I \times N_h^I$ &$(\overline{\mathbf{K}}_{II})_{j,l}=\displaystyle\int_{\Omega}\nabla\alpha_j \cdot \nabla\alpha_l d\boldsymbol x,\quad
            (\overline{\mathbf{M}}_{II})_{j,l}=\displaystyle\int_{\Omega}\alpha_j \alpha_l d\boldsymbol x$\\
$\overline{\mathbf{K}}_{IT}, \overline{\mathbf{M}}_{IT}$ &
$N_h^I \times N_h^T$ &
$(\overline{\mathbf{K}}_{IT})_{j,l}=\displaystyle\int_{\Omega}\nabla\alpha_j \cdot \nabla\beta_l d\boldsymbol x, \quad
            (\overline{\mathbf{M}}_{IT})_{j,l}=\displaystyle\int_{\Omega} \alpha_j \beta_l d\boldsymbol x$ \\
$\overline{\mathbf{K}}_{ID}, \overline{\mathbf{M}}_{ID}$ &
$N_h^I \times N_h^D$ &
$(\overline{\mathbf{K}}_{ID})_{j,l}=\displaystyle\int_{\Omega}\nabla\alpha_j \cdot \nabla\zeta_l d\boldsymbol x,\quad
            (\overline{\mathbf{M}}_{ID})_{j,l}=\displaystyle\int_{\Omega} \alpha_j \zeta_l d\boldsymbol x $ \\
$\overline{\mathbf{K}}_{TT},\overline{\mathbf{M}}_{TT}$ &
$N_h^T \times N_h^T$ &
$(\overline{\mathbf{K}}_{TT})_{j,l}=\displaystyle\int_{\Omega}\nabla\beta_j \cdot \nabla\beta_l d\boldsymbol x,\quad
            (\overline{\mathbf{M}}_{TT})_{j,l}=\displaystyle\int_{\Omega}\beta_j \beta_l d\boldsymbol x$ \\
\rule{0pt}{16pt} $\overline{\mathbf{M}}_{DD}$ &
$N_h^D \times N_h^D$ &
$(\overline{\mathbf{M}}_{DD})_{j,l}=\displaystyle\int_{\Omega}\zeta_j \zeta_l d\boldsymbol x$ \\[5pt]
\hline
\end{tabular}
\label{table_D_matrix}
\end{table}

The matrix $\mathbf{K}^{tb}$ is associated with the TBC and is given by
\begin{align*}
\mathbf{K}^{tb} =
\begin{bmatrix}
\mathbf{0} & \mathbf{0} & \mathbf{0}\\
\mathbf{0} & \mathbf{K}_{L}^{tb} & \mathbf{0}\\
\mathbf{0} & \mathbf{0} & \mathbf{0}
\end{bmatrix},
\end{align*}
where the matrix $\mathbf{K}_L^{tb}$ can be given by
\begin{align*}
\mathbf{K}_L^{tb} = \sum_{|n| \leq N}
\begin{bmatrix}
a_n \mathbf{K}_n^{tb} & 0\\
0 & b_n \mathbf{K}_n^{tb}
\end{bmatrix}.
\end{align*}
Here the truncation parameter $N$ is a positive integer, $a_n = h_n(\kappa R)/2\pi$, $ b_n = k_n(\kappa R)/2\pi$,
and $\mathbf{K}_n^{tb}$ is evaluated as follows:
\begin{align*}
\mathbf{K}_n^{tb} = \left(\int_0^{2\pi}\mathbf{N}^{tb}e^{{\rm i}n\theta}d\theta\right)_{N_{h}^T \times 1}
             \left(\int_0^{2\pi}(\mathbf{N}^{tb})^Te^{-{\rm i}n\theta'}d\theta'\right)_{1 \times N_{h}^T},
\end{align*}
where $\mathbf{N}^{tb}$ is a vector consisting of base functions $\{\beta_j\}_{j=1}^{N_h^T}$ on the boundary $\Gamma_R$.
The matrix $\overline{\mathbf{K}}_J$ associated with the interior penalty term $J(\phi, \psi)$ is given by
\begin{equation*}
\overline{\mathbf{K}}_J =
\begin{bmatrix}
\overline{\mathbf{J}}_{II} & \overline{\mathbf{J}}_{IT} & \overline{\mathbf{J}}_{ID}\\
\overline{\mathbf{J}}_{TI} & \overline{\mathbf{J}}_{TT} & 0              \\
\overline{\mathbf{J}}_{DI} & 0               & \overline{\mathbf{J}}_{DD}
\end{bmatrix} =
\sum_{j=\partial K \cap \partial K^{\prime} \in \mathcal{C}_h} h_j^2\mathbf{k}_j,
\end{equation*}
where $\overline{\mathbf{J}}_{TI} = (\overline{\mathbf{J}}_{IT})^{\top}$, $\overline{\mathbf{J}}_{DI} = (\overline{\mathbf{J}}_{ID})^{\top}$,
$\mathbf{k}_j = \mathbf{g}_j \mathbf{g}_j^\top$ with $\mathbf{g}_j$ being the discretized vector associated with the jump $[\partial_\nu \phi]$.
Specifically, the jumps $[\partial_\nu \phi]$ and $[\partial_\nu \psi]$ on the interior edge $j=\partial K \cap \partial K^{\prime}$ in the interior penalty term $J(\phi, \psi)$ can be written as
\[
[\partial_\nu \phi] = \partial_\nu^K \phi^K + \partial_\nu^{K^{\prime}} \phi^{K^{\prime}} = \mathbf{g}_j^\top \boldsymbol{\phi}, \quad
[\partial_\nu \psi] = \partial_\nu^K \psi^K + \partial_\nu^{K^{\prime}} \psi^{K^{\prime}} = \mathbf{g}_j^\top \boldsymbol{\psi},
\]
where $\boldsymbol{\phi}$ and $\boldsymbol{\psi}$ are vectors composed of the function values of $\phi$ and $\psi$ at all nodes in the domain $\Omega$, respectively. Additionally, the normal direction in $\partial_\nu^K$ is opposite to that in $\partial_\nu^{K^{\prime}}$.

Therefore, the interior penalty stiffness matrix $\mathbf{K}_J$ can be expressed as
\begin{align*}
\mathbf{K}_J =
\begin{bmatrix}
\mathbf{J}_{II} & \mathbf{J}_{IT} & \mathbf{J}_{ID}\\
\mathbf{J}_{TI} & \mathbf{J}_{TT} & \mathbf{0}\\
\mathbf{J}_{DI} & \mathbf{0} & 2\overline{\mathbf{J}}_{DD}
\end{bmatrix},
\end{align*}
where
\begin{align}
& \mathbf{J}_{ij} =
\begin{bmatrix}
\overline{\mathbf{J}}_{ij} & \mathbf{0}\\
\mathbf{0} & -\overline{\mathbf{J}}_{ij}
\end{bmatrix},i,j=I,T,\quad
\mathbf{J}_{ID} =
\begin{bmatrix}
\overline{\mathbf{J}}_{ID} \\
-\overline{\mathbf{J}}_{ID}
\end{bmatrix},\quad
\mathbf{J}_{DI} =
\begin{bmatrix}
\overline{\mathbf{J}}_{DI} & \overline{\mathbf{J}}_{DI}
\end{bmatrix}.\notag
\end{align}

Let us assume that the boundary $\partial D$ of the cavity is divided into $K$ segments $\Gamma_k$, where $k=1,\cdots, K$, in the mesh $\mathcal{M}_h$. The boundary penalty stiffness matrix $\mathbf{K}_G$ is associated with the boundary penalty term $G(p_h^{D},\varphi_h)$ and can be obtained by mapping $\mathbf{K}_G^L$ from local to global numbering. The matrix $\mathbf{K}_G^L$ can be evaluated as follows:
\begin{align*}
\mathbf{K}_G^L = \sum_{k=1}^K
\begin{bmatrix}
1 & -1\\
-1 & 1
\end{bmatrix}.
\end{align*}

\section{Numerical experiments}\label{sec6}

 In this section, we present numerical results obtained using the IP-FEM and BP-FEM for three examples: a circular-shaped cavity, an ellipse-shaped cavity, and a kite-shaped cavity. In the experiments, we investigate the out-of-plane displacement of the scattered field $v$ and its bending moment $w=\kappa^{-2}\Delta v$ by solving the boundary value problem \eqref{PQ_bvp_Scattered} and using the relationships $(p^s, q^s)$ and $(v, \Delta v)$. The relative errors in the $\emph{L}^2$ norm and the $\emph{H}^1$ semi-norm are employed to assess the numerical solutions. For the circular-shaped cavity, we compare the results against the analytic solution, while for the ellipse-shaped and kite-shaped cavities, we use reference solutions, i.e., the numerical solutions obtained with fine meshes. The relative errors of the $\emph{L}^2$ norm and the $\emph{H}^1$ semi-norm of any function $\phi$ are defined as follows:
 \[
 \mathrm{E}_{L^2}=\frac{\|\phi^e-\phi^n\|_{0,\Omega}}{\|\phi^e\|_{0, \Omega}},\quad \mathrm{E}_{H^1}=\frac{\|\nabla\phi^e-\nabla\phi^n\|_{0,\Omega}}{\|\nabla\phi^e\|_{0, \Omega}},
 \]
where $\phi^e$ and $\phi^n$ represent the analytical or reference solution and the numerical solution, respectively.

\subsection{A circular-shaped cavity}

Consider a circular-shaped cavity $D=B_{\hat R}$, which is illuminated by a plane wave
\[
u^{\rm inc}(\boldsymbol{x}) = e^{{\rm i}\kappa \boldsymbol{x}\cdot \boldsymbol{d}},
\]
where $\kappa >0 $ is the wavenumber and $\boldsymbol{d} = (\cos\alpha, \sin \alpha)$ is the incident direction with $\alpha$ being the incident angle. The parameter equation of the circular-shaped cavity with radius $\hat R$ is
\begin{align*}
x(\theta) = \hat R \cos\theta,\quad y(\theta) = \hat R\sin\theta,\quad \theta\in [0, 2\pi).
\end{align*}

\subsubsection{The analytical solution}

The Helmholtz and modified Helmholtz wave components $v_H$ and $v_M$ of the out-of-plane displacement of the scattered field $v$ satisfy the coupled boundary value problem
\begin{equation} \label{SF_bvp3}
\left\{
\begin{aligned}
& \Delta v_H + \kappa^2 v_H = 0,\quad   \Delta v_M - \kappa^2 v_M = 0& \text{in} ~ \mathbb{R}^2\setminus\overline{B_{\hat R}},\\
& v_H + v_M = f(\theta),\quad \partial_r v_H + \partial_r v_M = g(\theta) & \text{on} ~ \partial B_{\hat R},\\
\end{aligned}
\right.
\end{equation}
where $f(\theta) = -u^{\rm inc}$ and $g(\theta)=-\partial_r u^{\rm inc}$.
The analytical solution of \eqref{SF_bvp3} has the Fourier series expansion in polar coordinates:
\begin{equation}\label{FourSF}
v_H(r,\theta) = \sum_{n \in \mathbb{Z}} \frac{H_n^{(1)}(\kappa r)}{H_n^{(1)}(\kappa \hat R)}v_H^{(n)}(\hat R)e^{{\rm i} n\theta}, \quad
v_M(r,\theta) = \sum_{n \in \mathbb{Z}} \frac{K_n(\kappa r)}{K_n(\kappa \hat R)}v_M^{(n)}(\hat R)e^{{\rm i} n\theta},
\end{equation}
where the Fourier coefficients $v_H^{(n)}(\hat R)$ and $v_M^{(n)}(\hat R)$ are given by
\[
 v_H^{(n)}(\hat R) = \frac{1}{2\pi}\int_0^{2\pi} v_H(\hat R,\theta) e^{-{\rm i}n\theta} d\theta,\quad v_M^{(n)}(\hat R) = \frac{1}{2\pi}\int_0^{2\pi} v_M(\hat R,\theta) e^{-{\rm i}n\theta} d\theta.
\]
Since $f(\theta)$ and $g(\theta)$ are periodic functions with period $2\pi$, we have
\begin{align}\label{Fourbd}
f(\theta) = \sum\limits_{n \in \mathbb{Z}}f^{(n)}e^{{\rm i} n \theta}, \quad
g(\theta) = \sum\limits_{n \in \mathbb{Z}}g^{(n)}e^{{\rm i} n \theta},
\end{align}
where the Fourier coefficients $f^{(n)}$ and $g^{(n)}$ are
\[
f^{(n)} = \frac{1}{2\pi}\int_0^{2\pi}f(\theta) e^{-{\rm i}n\theta} d\theta, \quad g^{(n)} = \frac{1}{2\pi}\int_0^{2\pi}g(\theta) e^{-{\rm i}n\theta} d\theta.
\]
Substituting \eqref{FourSF}--\eqref{Fourbd} into the boundary condition on $\partial B_{\hat R}$ yields a linear system of algebraic equations
\begin{equation*}
\left\{
\begin{aligned}
& v_H^{(n)}(\hat R) + v_M^{(n)}(\hat R) = f^{(n)},\\
& \kappa \frac{H_n^{(1)'}(\kappa \hat R)}{H_n^{(1)}(\kappa \hat R)}v_H^{(n)} +
    \kappa \frac{K_n^{'}(\kappa \hat R)}{K_n(\kappa \hat R)}v_M^{(n)} =g^{(n)},
\end{aligned}
\right.
\end{equation*}
which has a matrix form
\begin{align} \label{ExactS}
A
    \begin{bmatrix}
        v_H^{(n)}\\
        v_M^{(n)}
    \end{bmatrix} =
    \begin{bmatrix}
        1 & 1\\
        \kappa\frac{H_n^{(1)'}(\kappa \hat R)}{H_n^{(1)}(\kappa \hat R)}
        & \kappa \frac{K_n^{'}(\kappa \hat R)}{K_n(\kappa \hat R)}
    \end{bmatrix}
    \begin{bmatrix}
        v_H^{(n)}\\
        v_M^{(n)}
    \end{bmatrix}=
    \begin{bmatrix}
        f^{(n)}\\
        g^{(n)}
    \end{bmatrix}.
\end{align}
We can obtain the solution of \eqref{ExactS} using Cramer's rule that
\begin{align}\label{FourCo}
\left\{
\begin{aligned}
& v_H^{(n)}  = \frac{1}{b_n}\left(\kappa^{-1} g^{(n)}
            -\frac{K_n^{'}(\kappa \hat R)}{K_n(\kappa \hat R)}f^{(n)}\right),\\
& v_M^{(n)}  = \frac{1}{b_n}\left(\frac{H_n^{(1)'}(\kappa \hat R)}{H_n^{(1)}(\kappa \hat R)}f^{(n)}
            -\kappa^{-1} g^{(n)}\right),
\end{aligned}
\right.
\end{align}
where $b_n$ is the determinant of the coefficient matrix $A$ and is given by
\[
b_n= \left(\frac{H_n^{(1)'}(\kappa \hat R)}{H_n^{(1)}(\kappa \hat R)}
                - \frac{K_n^{'}(\kappa \hat R)}{K_n(\kappa \hat R)}\right).
\]

From \eqref{FourSF} and \eqref{FourCo}, we can obtain the analytical solutions $v_H$ and $v_M$. Then, using the following relationships, the scattered field $v$ and its bending moment $w$ can be expressed as follows:
\[
v = v_H + v_M, \quad w = v_M - v_H.
\]

\begin{theorem}\label{helmphi}
The linear system \eqref{ExactS} has a unique solution.
\end{theorem}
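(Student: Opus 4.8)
The plan is to reduce the claim to showing that the $2\times 2$ coefficient matrix $A$ in \eqref{ExactS} is nonsingular for every $n\in\mathbb{Z}$ and all $\kappa,\hat R>0$, since for a square linear system the nonsingularity of the coefficient matrix is equivalent to the existence and uniqueness of the solution. Thus it suffices to prove $\det A\neq 0$.

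First I would compute the determinant directly from the entries of $A$, obtaining
\[
\det A = \kappa\frac{K_n^{\prime}(\kappa\hat R)}{K_n(\kappa\hat R)}-\kappa\frac{H_n^{(1)\prime}(\kappa\hat R)}{H_n^{(1)}(\kappa\hat R)}.
\]
Then, recalling the definitions $h_n(z)=z H_n^{(1)\prime}(z)/H_n^{(1)}(z)$ and $k_n(z)=zK_n^{\prime}(z)/K_n(z)$ used in Lemma \ref{Lemma1}, I would factor out $\kappa\hat R$ and rewrite this, with $z=\kappa\hat R$, as
\[
\det A = \frac{1}{\hat R}\bigl(k_n(\kappa\hat R)-h_n(\kappa\hat R)\bigr),
\]
so that the problem is reduced to showing $k_n(z)\neq h_n(z)$ for $z>0$.

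The decisive step is to pass to imaginary parts and invoke Lemma \ref{Lemma1}. Since $z=\kappa\hat R>0$, that lemma gives $\Im(k_n(z))=0$ and $\Im(h_n(z))>0$, whence
\[
\Im\bigl(k_n(z)-h_n(z)\bigr)=-\Im(h_n(z))<0.
\]
A complex number with nonzero imaginary part is itself nonzero, so $k_n(z)-h_n(z)\neq 0$ and therefore $\det A\neq 0$. This yields the nonsingularity of $A$ for every $n$, and hence the unique solvability of \eqref{ExactS}; the closed-form expressions \eqref{FourCo} obtained by Cramer's rule are then automatically well defined.

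I do not anticipate a genuine obstacle: once Lemma \ref{Lemma1} is available, the argument is a one-line determinant computation followed by an imaginary-part comparison. The only point requiring minor care is the algebraic bookkeeping of the factor $\kappa\hat R$ when converting the logarithmic-derivative entries of $A$ into $h_n$ and $k_n$; beyond that, the sign information $\Im(h_n)>0$ and $\Im(k_n)=0$ does all the work, mirroring the imaginary-part argument already used in the proof of Theorem \ref{Th1}.
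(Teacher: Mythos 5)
Your proposal is correct and follows essentially the same route as the paper's proof: compute $\det A = \frac{1}{\hat R}\bigl(k_n(\kappa\hat R)-h_n(\kappa\hat R)\bigr)$, then use Lemma \ref{Lemma1} to see that its imaginary part equals $-\Im\bigl(h_n(\kappa\hat R)\bigr)/\hat R \neq 0$. No gaps; the argument is complete.
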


\begin{proof}
It suffices to show that the coefficient matrix $A$ of \eqref{ExactS} is nonsingular, i.e., $\det(A)\neq 0$. A simple calculation gives
\begin{align*}
\det(A) = \frac{\kappa K_n{'}(\kappa \hat R)}{K_n(\kappa \hat R)}
        - \frac{\kappa H_n^{(1)'}(\kappa \hat R)}{H_n^{(1)}(\kappa \hat R)}
        = \frac{1}{\hat{R}}\left(k_n(\kappa\hat{R}) - h_n(\kappa\hat{R})\right).
\end{align*}
Taking the imaginary part of $\det(A)$ and using Lemma \ref{Lemma1}, we have
\[
\Im(\det(A)) = -\frac{1}{\hat R} \Im\{h_n(\kappa \hat{R})\}
= -\frac{2}{\pi \hat{R}}\frac{1}{|H_n^{(1)}(\kappa \hat{R})|^2}\neq 0,
\]
which implies that the coefficient matrix $A$ of \eqref{ExactS} is nonsingular and there exists a unique solution to the system of equations \eqref{ExactS}.
\end{proof}

In the experiments, we set $\hat R = 0.3$ for the circular-shaped cavity and the radius $R = 0.6$ for the TBC. The incident angle $\alpha = \pi/3$ and the wavenumber $\kappa = \pi$, corresponding to a wavelength $\lambda = 2$. We choose the DtN operator truncation number $N$ to be 15.

\subsubsection{The influence of $\gamma$}

In the IP-FEM, the penalty parameter $\gamma$ plays a crucial role. In this subsection, we investigate the influence of the penalty parameter $\gamma$ on the accuracy of the IP-FEM. If $\gamma$ is too large, it introduces artificial dissipation in the numerical results. On the other hand, if $\gamma$ is too small, we observe an oscillation behavior of the bending moment $w$ on the cavity boundary, similar to what is seen in the regular linear FEM ($\gamma=0$), as shown in the left part of Figure \ref{Figure_Gamma_BC}.

\begin{figure}
  \centering
  \includegraphics[width=0.45\textwidth]{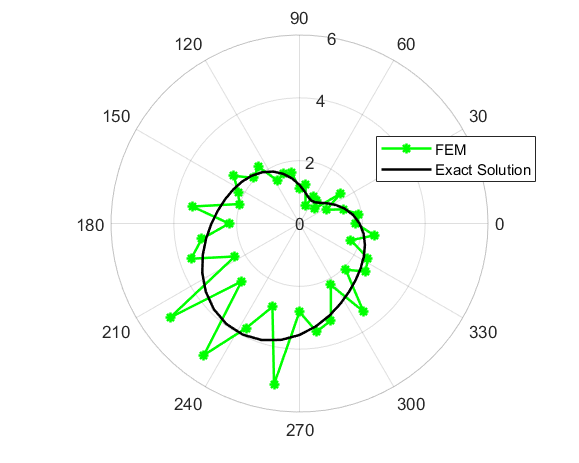}
  \includegraphics[width=0.45\textwidth]{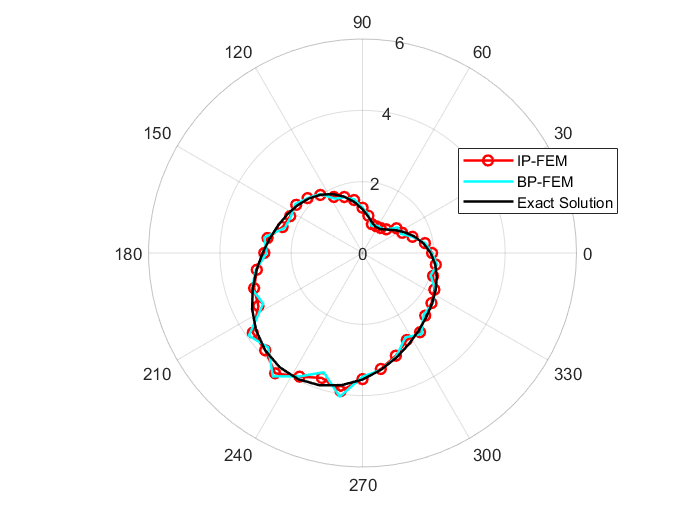}
  \caption{Example 1: The bending moment $w$ on the boundary of cavity:
  (left) The regular linear FEM ($\gamma=0$); (right) The IP-FEM ($\gamma = 4.2\times 10^{-3}$) and the BP-FEM ($\eta = 2.5\kappa\times 10^{-3} $).}
  \label{Figure_Gamma_BC}
\end{figure}

Figure \ref{Figure_gamma} presents the relative errors of the $L^2$ norm and the $H^1$ semi-norm of $v$ and $w$ with different values of the parameter $\gamma$ at the mesh size $h=0.05$. The relative $L^2$ and $H^1$ errors of $v$ increase as $\gamma$ becomes larger, but they remain at levels of $10^{-3}$ and $10^{-2}$, respectively. However, the relative $L^2$ and $H^1$ errors of $w$ first decrease and then increase with increasing $\gamma$, with the minimum relative $L^2$ error of $w$ occurring at $\gamma=4.2\times 10^{-3}$. The solution of $w$ on the cavity boundary for the IP-FEM with the optimal parameter $\gamma=4.2\times 10^{-3}$ is shown in the right part of Figure \ref{Figure_Gamma_BC}. It is evident that the boundary oscillation behavior of $w$ is mitigated compared to the regular linear FEM, i.e., the linear FEM without any penalty term ($\gamma=0$). Based on these observations, we conclude that there exists a range of values for $\gamma$ that significantly improves the results for the bending moment $w$ while maintaining good results for the displacement $v$.

\begin{figure}
  \centering
  \includegraphics[width=0.8\textwidth]{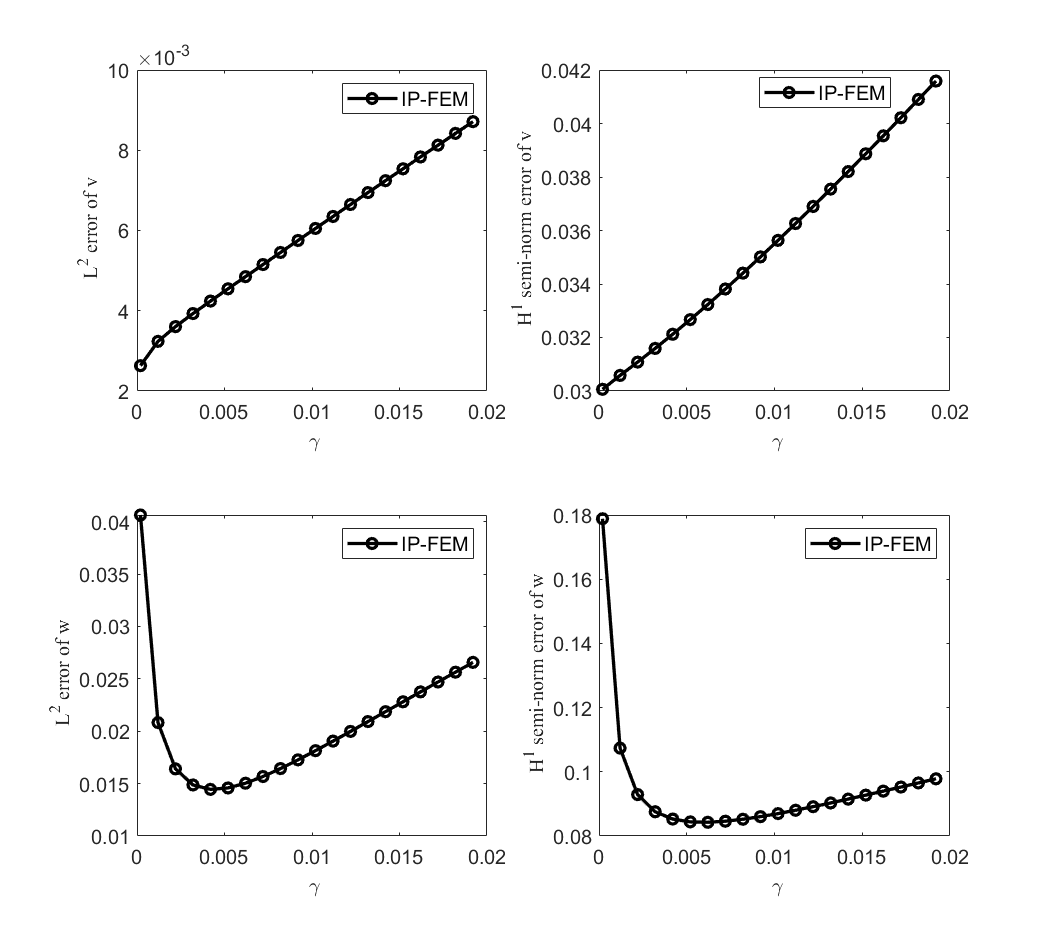}
  \caption{Example 1: The relative errors are plotted against the parameter $\gamma$ for $\kappa=\pi$.}
  \label{Figure_gamma}
\end{figure}

\begin{figure}
  \centering
  \includegraphics[width=0.92\textwidth]{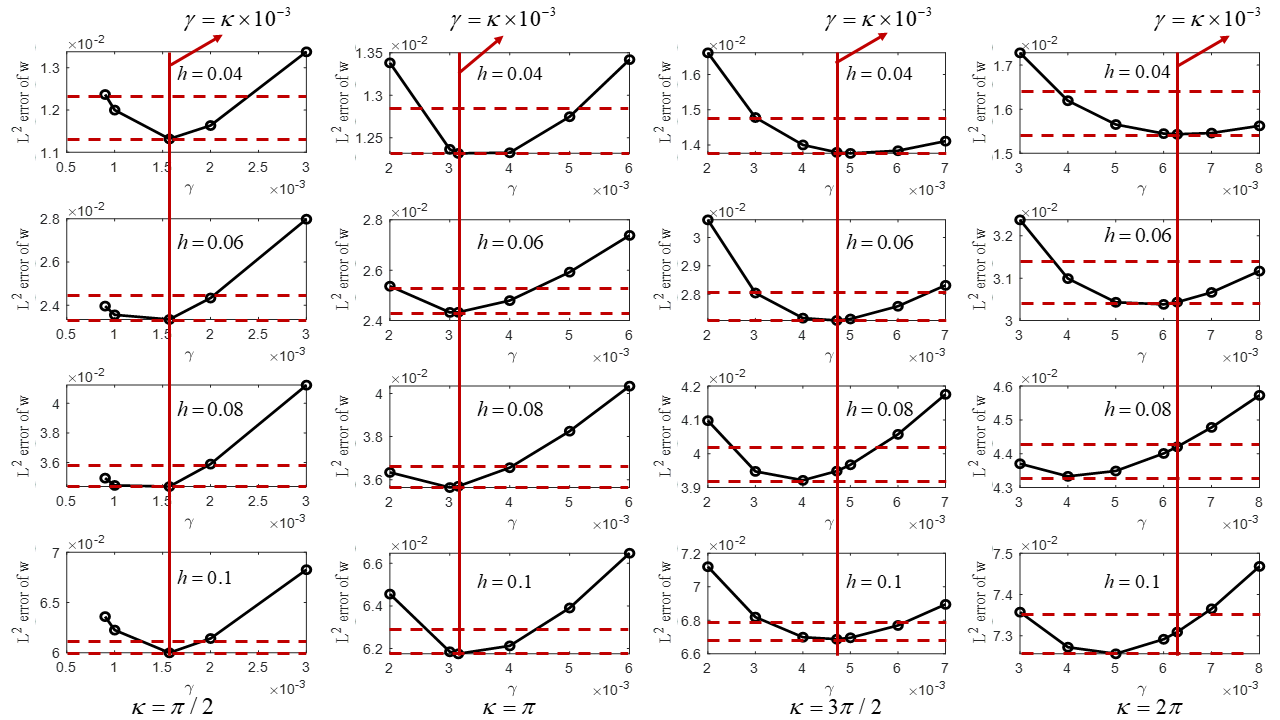}
  \caption{The optimal parameter $\gamma$ with different wavenumbers and mesh sizes $h$ for the IP-FEM.}
  \label{Figure_gamma_selection}
\end{figure}

Figure \ref{Figure_gamma_selection} presents the optimal penalty parameter $\gamma$ at different wavenumbers ($\kappa=0.5\pi, \pi, 1.5\pi, 2\pi$) and mesh sizes ($h=0.04, 0.06, 0.08, 0.1$). The figure contains sixteen cases, and each subfigure shows the variation of the relative $L^2$ error of $w$ with the parameter $\gamma$. In this paper, relative $L^2$ errors of $w$ within $1\times10^{-3}$ (shown by two horizontal red dashed lines from the smallest error $e_{\rm low}$ to $e_{\rm low}+1\times10^{-3}$) are considered acceptable, and the corresponding penalty parameters $\gamma$ are considered acceptably optimal. From these subfigures, we observe that for $\kappa=0.5\pi$, the optimal parameters are located at $\gamma=1.57\times10^{-3}$ for $h=0.04, 0.06, 0.08, 0.1$, and similar results are obtained for other cases ($\kappa = \pi, 1.5\pi, 2\pi$). This implies that the optimal parameter $\gamma$ is directly proportional to the wavenumber $\kappa$ and is less affected by the mesh size $h$. For the discretized problem \eqref{Discret_Mat_EQ1}, a suitable choice for the penalty parameter is $\gamma = \kappa \times 10^{-3} $.

\subsubsection{The influence of $\eta$}

In this subsection, we explore the influence of the penalty parameter $\eta$ in the BP-FEM. In this method, $\eta$ plays an important role as a penalty parameter. An appropriate parameter value $\eta$ can effectively suppress the oscillation of $w$ on the cavity boundary, as shown in the right part of Figure \ref{Figure_Gamma_BC}. However, if $\eta$ is too large, it introduces artificial dissipation in the numerical results. Conversely, if $\eta$ is too small, we observe oscillation behavior of the bending moment $w$ on the cavity boundary, similar to what is seen in the regular linear FEM ($\eta=0$), as shown in the left part of Figure \ref{Figure_Gamma_BC}.

Figure \ref{Figure_eta} displays the relative $L^2$ and $H^1$ errors of $v$ and $w$ for different values of the parameter $\eta$ at a mesh size of $h=0.05$. These figures reveal that the relative $L^2$ and $H^1$ errors of $v$ increase as $\eta$ increases, but they remain at levels of $10^{-3}$ and $10^{-2}$, respectively. Conversely, the relative $L^2$ errors of $w$ first decrease and then increase as $\eta$ increases, with the smallest error obtained for $\eta \in (7.0\times 10^{-3},1.7\times 10^{-2})$. Based on these observations, we conclude that there exists a range of values for $\eta$, where the results for $w$ are significantly improved while maintaining good results for $v$.

\begin{figure}
  \centering
  \includegraphics[width=0.8\textwidth]{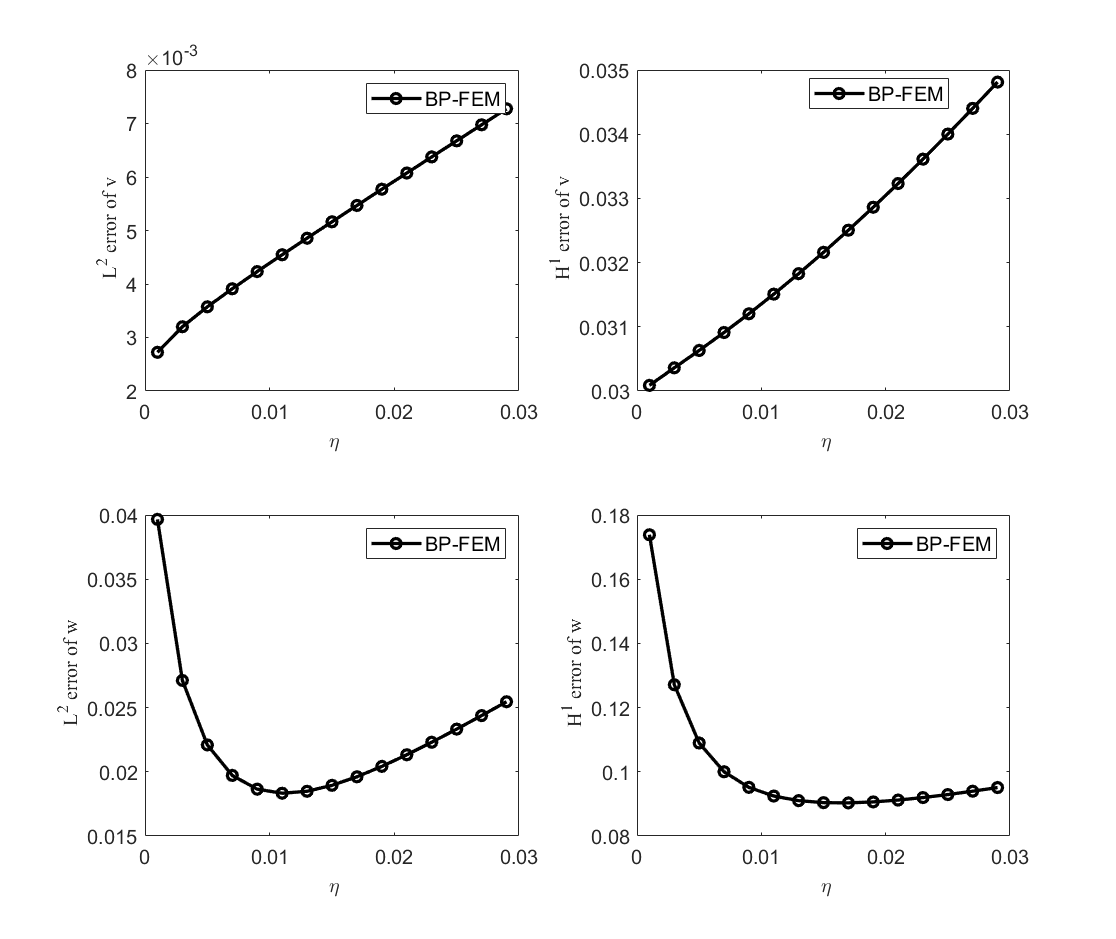}
  \caption{Example 1: The relative errors are plotted against the parameter $\eta$ for $\kappa=\pi$.}
  \label{Figure_eta}
\end{figure}

Figure \ref{Figure_eta_selection} presents the optimal penalty parameter $\eta$ at different wavenumbers ($\kappa=0.5\pi, \pi, 1.5\pi, 2\pi$) and mesh sizes ($h=0.04, 0.06, 0.08, 0.1$). The figure contains sixteen cases, and each subfigure shows the variation of the relative $L^2$ error of $w$ with the parameter $\eta$. In this paper, relative $L^2$ errors of $w$ within $1\times10^{-3}$ (shown by two horizontal red dashed lines from the smallest error $e_{\rm low}$ to $e_{\rm low}+1\times10^{-3}$) are considered acceptable, and the corresponding penalty parameters $\eta$ are considered acceptably optimal. From these subfigures, we observe similar results to those in Figure \ref{Figure_gamma_selection}. This also implies that the optimal parameter $\eta$ is directly proportional to the wavenumber $\kappa$ and is less affected by the mesh size $h$. Consequently, for convenience, the penalty parameter $\eta$ for the discretized problem \eqref{Discret_Mat_EQ2} can be chosen as $\eta = 2.5\kappa \times 10^{-3}$.

\begin{figure}
  \centering
  \includegraphics[width=0.92\textwidth]{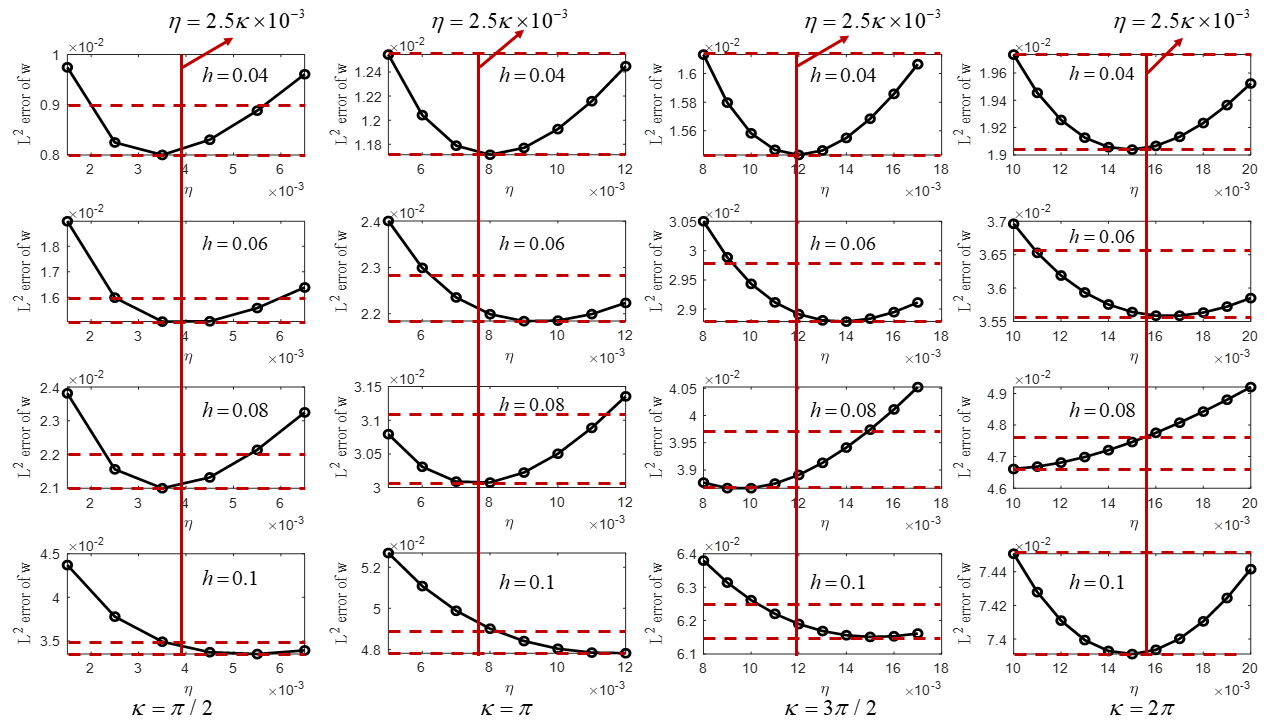}
  \caption{The optimal parameter $\eta$ with different wavenumbers and mesh sizes $h$ in the BP-FEM.}
  \label{Figure_eta_selection}
\end{figure}

\subsubsection{The influence of $\kappa$}

In this subsection, we consider the effects of the wavenumber on the solution accuracy with a fixed mesh size of $h=0.05$. Figure \ref{Figure_Ex1_wavenumber} illustrates the relative $L^2$ and $H^1$ errors of solutions $v$ and $w$ obtained using the regular linear FEM ($\gamma=0$), the IP-FEM ($\gamma = \kappa\times 10^{-3}$), and the BP-FEM ($\eta = 2.5\kappa \times 10^{-3}$). From these figures, we observe that the behavior of $v$ is similar for all three methods. However, for the solution $w$, both the IP-FEM and the BP-FEM show significant improvements compared to the regular linear FEM. Additionally, the errors in both $L^2$ and $H^1$ norms increase as the wavenumber $\kappa$ increases, regardless of the method used.

\begin{figure}
  \centering
  \includegraphics[width=0.8\textwidth]{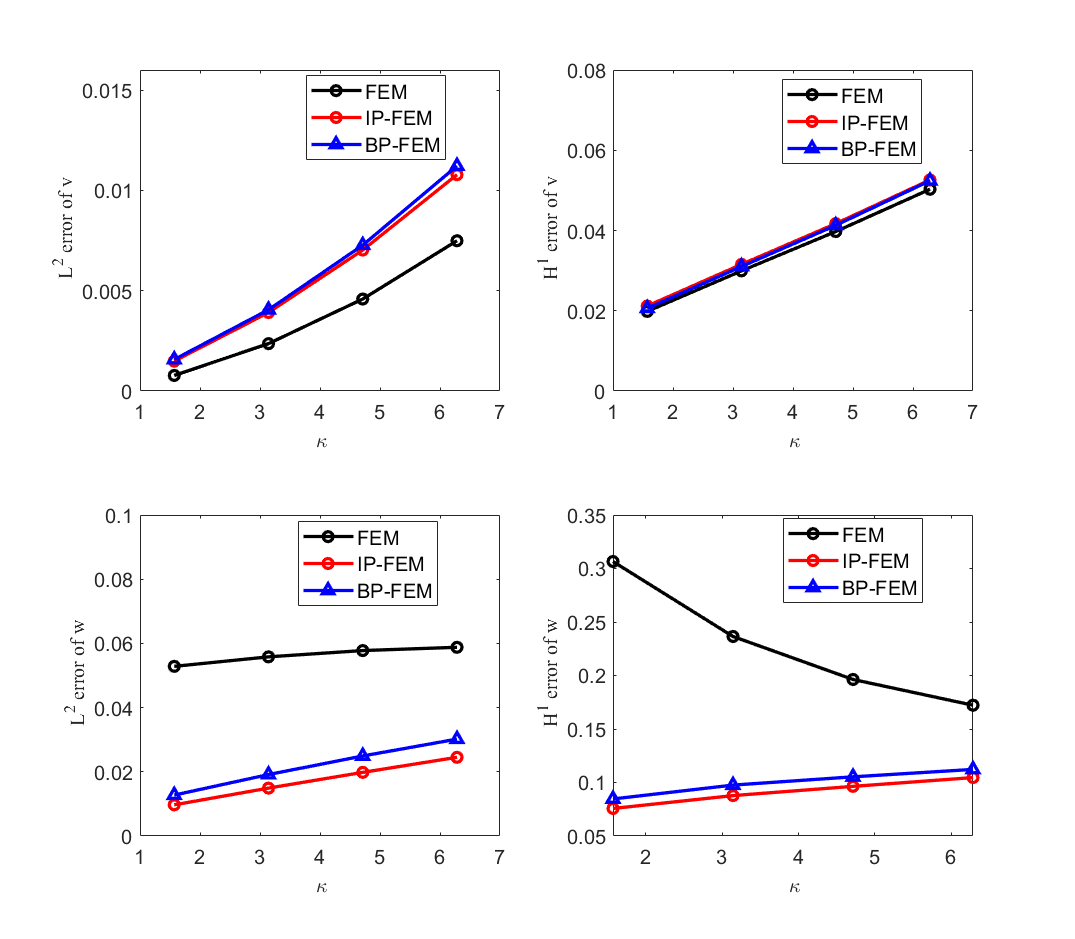}
  \caption{Example 1: The relative errors are plotted against the wavenumber with $\gamma =\kappa\times 10^{-3}$ and $\eta = 2.5\kappa\times 10^{-3} $.}\label{Figure_Ex1_wavenumber}
\end{figure}

\subsubsection{Convergence}

In this subsection, we examine the convergence of the IP-FEM and the BP-FEM. Figure \ref{Figure_Ex1_convergence} displays the relative errors of the $L^2$ norm and the $H^1$ semi-norm for the scattered field $v$ and its bending moment $w$ using different methods. From these figures, we observe that the convergence rates of the relative $L^2$ and $H^1$ errors of $v$ and $w$ for both the IP-FEM and the BP-FEM achieve the optimal convergence order.

\begin{figure}
  \centering
  \includegraphics[width=0.8\textwidth]{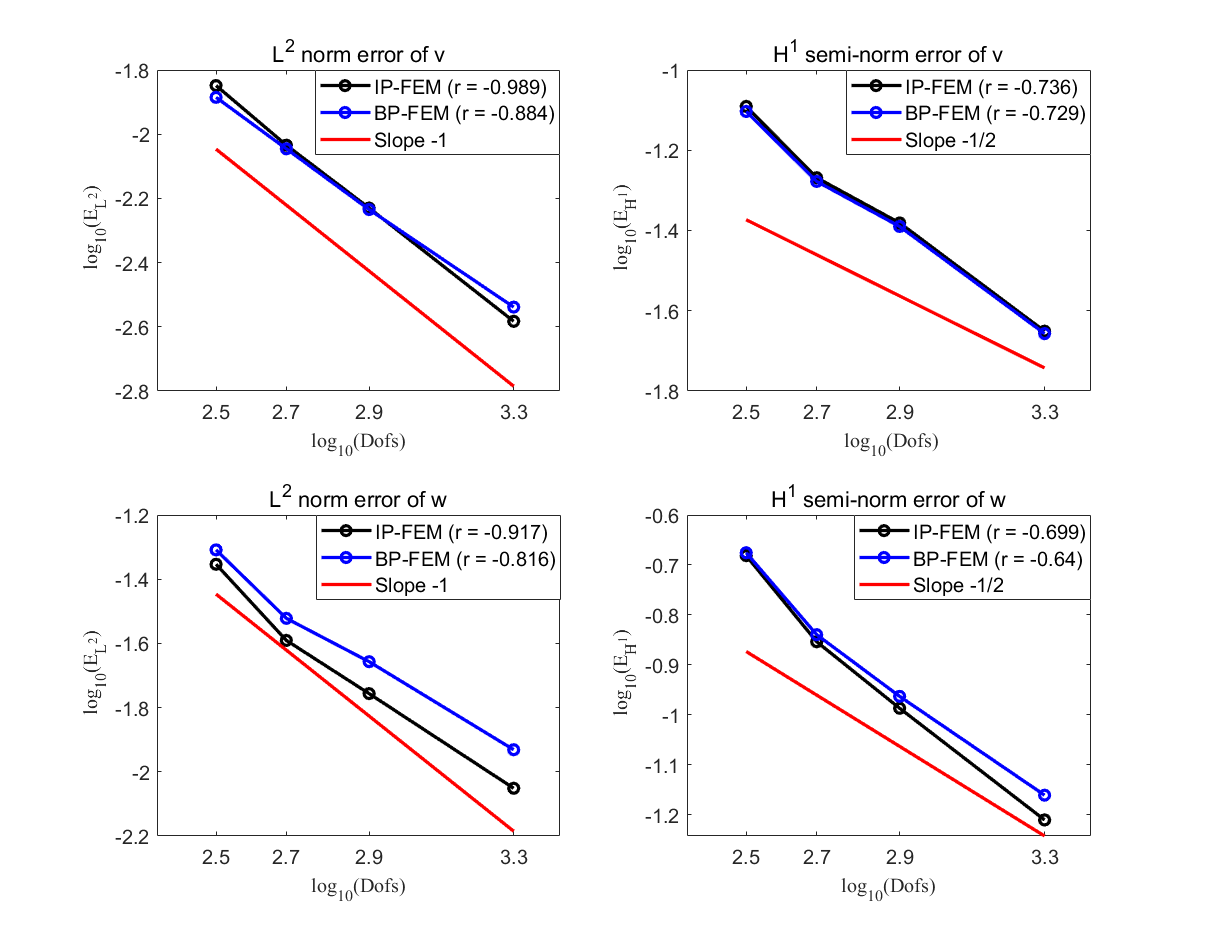}
  \caption{Example 1: The relative errors are plotted against the degrees of freedom (Dofs) for the IP-FEM ($\gamma = \pi \times 10^{-3}$) and the BP-FEM ($\eta = 2.5\pi \times 10^{-3}$).}\label{Figure_Ex1_convergence}
\end{figure}

\subsection{An ellipse-shaped cavity}

In this example, we study the flexural wave scattering by an elliptical cavity with the clamped boundary. The boundary of the ellipse is described by the following parametric equations:
\begin{equation*}
x(t)=a\cos(\theta),\quad y(t)=b\sin(\theta),
\end{equation*}
where the major semi-axis $a = 0.4$ and the minor semi-axis $b = 0.2$. The parameter $\theta$ ranges from $0$ to $2\pi$.
In the experiments, the open domain is truncated by a circle with a radius $R = 0.6$, and we choose the penalty parameters as follows: $\gamma = \kappa \times 10^{-3}$ for the IP-FEM and $\eta = 2.5\kappa \times 10^{-3} $ for the BP-FEM. All other related parameters remain the same as in the first example. For the sake of comparison, we obtain the reference solution using the IP-FEM with $\gamma = \kappa \times 10^{-3}$ on a very fine mesh.

\subsubsection{Accuracy}

In this subsection, we consider the effectiveness of the IP-FEM and the BP-FEM. The mesh size and the wavenumber are set as $h=0.05$ and $\kappa = \pi$, respectively. Figure \ref{Figure_example2_accuracy} shows the solutions $w$ obtained using the regular FEM ($\gamma=0$), the IP-FEM ($\gamma = \kappa\times 10^{-3}$), and the BP-FEM ($\eta = 2.5\kappa \times 10^{-3}$) on the cavity boundary. For the results on the entire domain, we only present the regular linear FEM and the IP-FEM, as the BP-FEM yields similar outcomes to the IP-FEM. From these figures, we observe that both the IP-FEM and the BP-FEM effectively suppress the oscillations of the bending moment $w$ on the cavity boundary when compared with the regular linear FEM.

\begin{figure}
  \centering
  \includegraphics[width=0.45\textwidth]{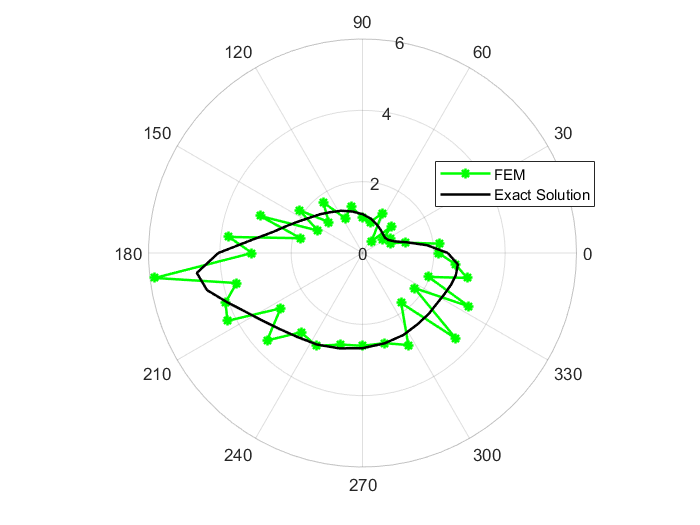}
  \includegraphics[width=0.45\textwidth]{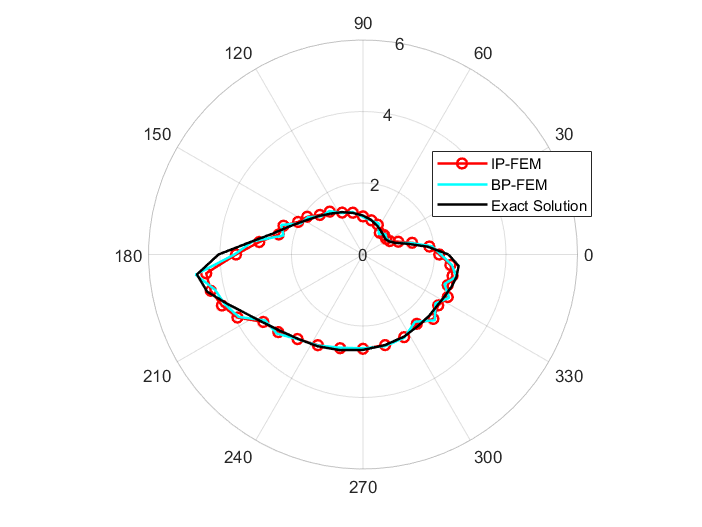}
  \includegraphics[width=0.45\textwidth]{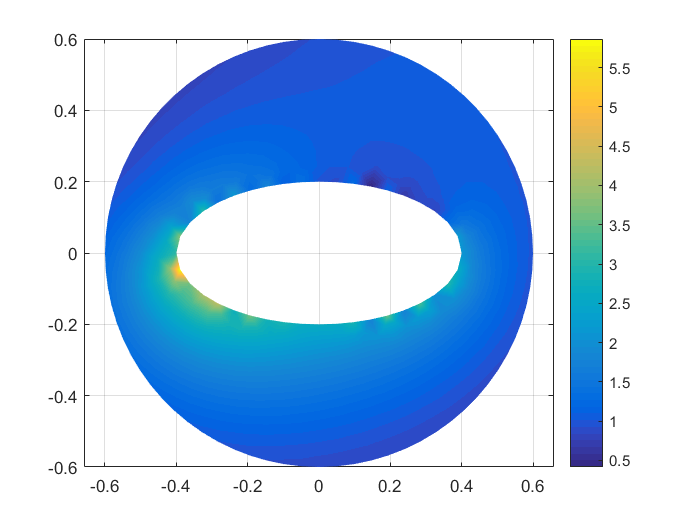}
  \includegraphics[width=0.45\textwidth]{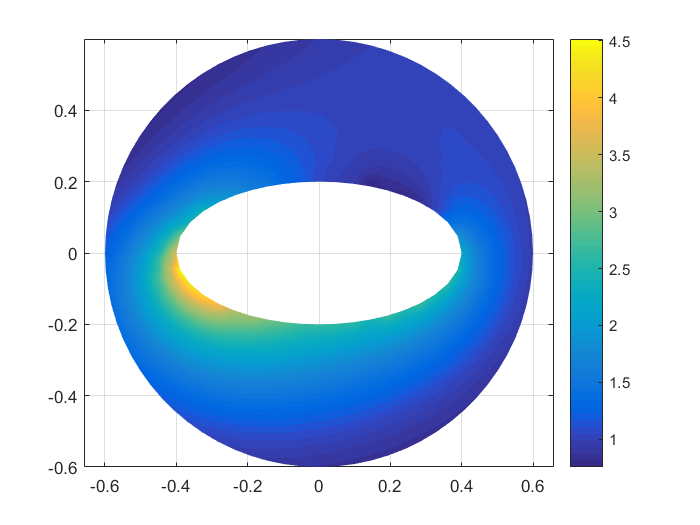}
  \caption{Example 2: The bending moment $w$ on the boundary of cavity and the entire domain:
  (left) The regular linear FEM ($\gamma=0$); (right) the IP-FEM ($\gamma = \kappa\times 10^{-3}$ and the BP-FEM ($\eta = 2.5\kappa\times 10^{-3}$).}
  \label{Figure_example2_accuracy}
\end{figure}

\subsubsection{Convergence}

The convergence of the IP-FEM and the BP-FEM is investigated in this subsection. Figure \ref{Figure_Convergence_example2} illustrates the convergence of the relative errors of the $L^2$ norm and the $H^1$ semi-norm for the scattered field $v$ and its bending moment $w$ using various methods. From these figures, it is evident that the convergence rates of the relative $L^2$ and $H^1$ errors for $v$ and $w$ with the IP-FEM and the BP-FEM achieve optimal convergence orders.

\begin{figure}
  \centering
  \includegraphics[width=0.8\textwidth]{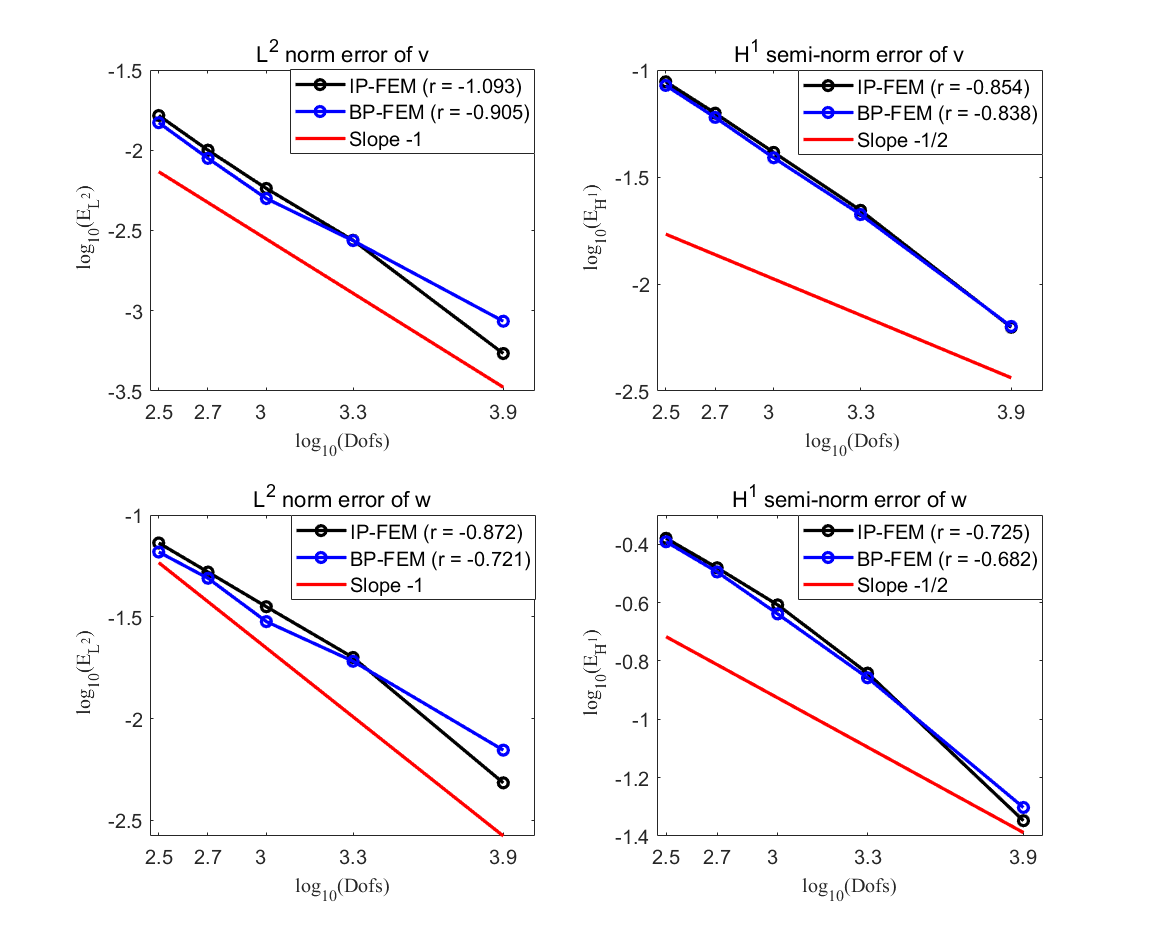}
  \caption{Example 2: The relative errors are plotted against the degrees of freedom (Dofs) for the elliptical cavity with $\gamma = \kappa \times 10^{-3}$ for the IP-FEM and with $\eta = 2.5 \kappa \times 10^{-3}$ for the BP-FEM.}
  \label{Figure_Convergence_example2}
\end{figure}

\subsection{A kite-shaped cavity}

The subsection investigates the scattering by a kite-shaped cavity. The parametric equations for the kite-shaped cavity boundary are given as follows:
\begin{equation*}
x(t) = a\cos(\theta) + b\cos(2\theta) - c, \quad y(t) = a\sin(\theta),
\end{equation*}
where the parameters are defined as $\theta \in [0, 2\pi)$, $a = 0.3$, $b = 0.2$, and $c = 0.1$. In the experiments, the open domain is truncated by a circle with a radius of $R=0.6$. Figure \ref{Fig_ex3_domain_mesh} displays the truncated domain and the mesh used for solving the kite-shaped cavity scattering problem. The remaining parameters for this problem are the same as those used in the second example. For the sake of comparison, the reference solution is obtained using the IP-FEM with $\gamma=\kappa\times 10^{-3}$ on a very fine mesh.

\begin{figure}[ht]
\centering
\includegraphics[width=0.35\textwidth]{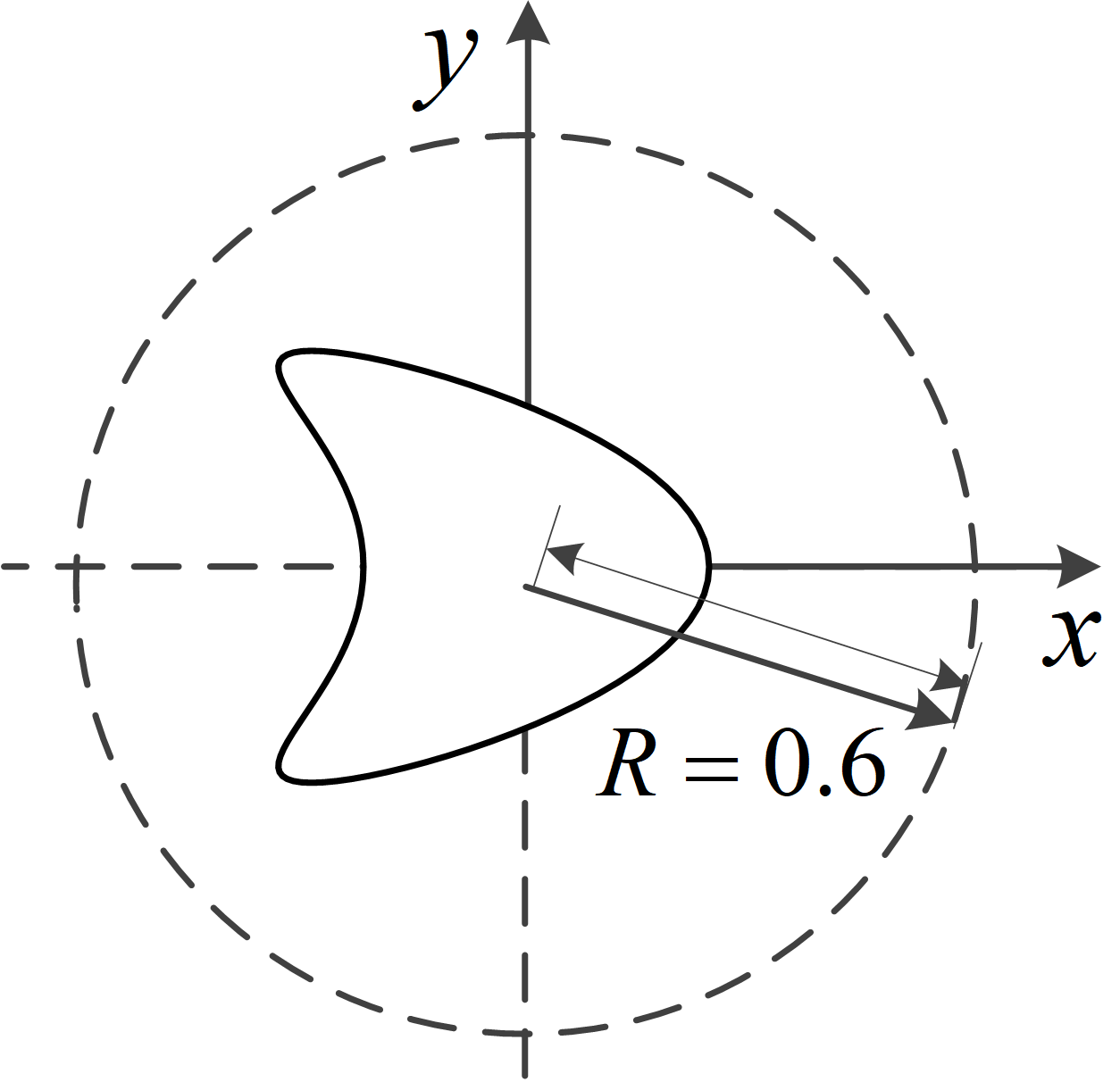}
\hspace{1cm}
\includegraphics[width=0.37\textwidth]{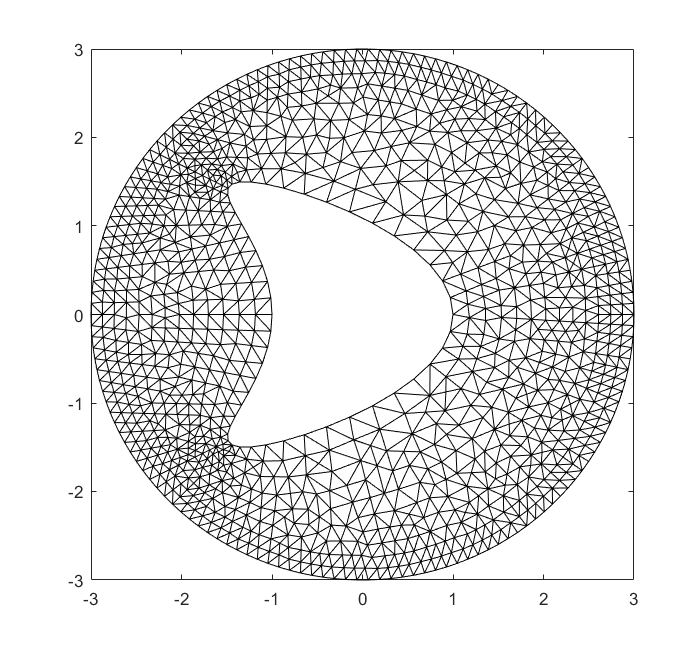}
\caption{Geometry of a kite-shaped cavity:
(a) The truncated domain; (b) A FEM mesh.}
\label{Fig_ex3_domain_mesh}
\setlength{\belowcaptionskip}{10pt}
\end{figure}

\subsubsection{Accuracy}

In this subsection, we present the regular linear FEM, the IP-FEM, and the BP-FEM to validate the effectiveness of the proposed method. The mesh size and the wavenumber are set as $h=0.05$ and $\kappa = \pi$, respectively. Figure \ref{Figure_example3_accuracy} displays the solutions $w$ obtained using the regular linear FEM ($\gamma=0$), the IP-FEM ($\gamma = \kappa\times 10^{-3}$), and the BP-FEM ($\eta = 2.5\kappa\times 10^{-3}$) on the cavity boundary. Additionally, the corresponding results of the regular linear FEM and the IP-FEM on the entire domain are also presented in the figure. It is noted that the result of the BP-FEM on the entire domain is similar to that of the IP-FEM. From these figures, we observe that both the IP-FEM and BP-FEM effectively suppress the oscillation of the bending moment on the cavity boundary compared with the regular linear FEM.

\begin{figure}
  \centering
  \includegraphics[width=0.45\textwidth]{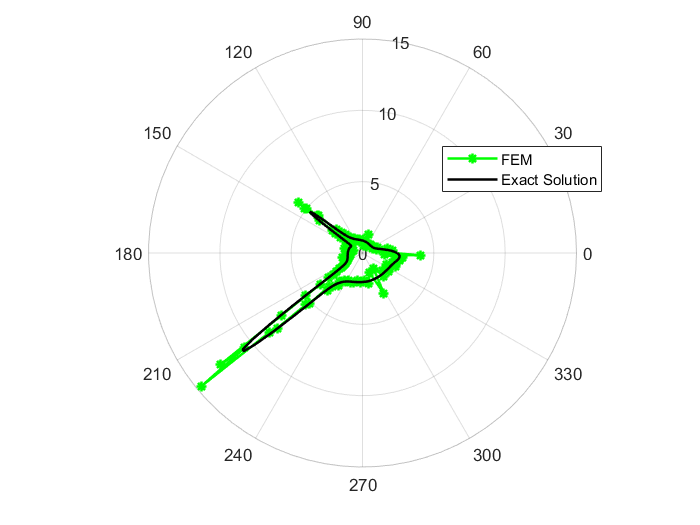}
  \includegraphics[width=0.45\textwidth]{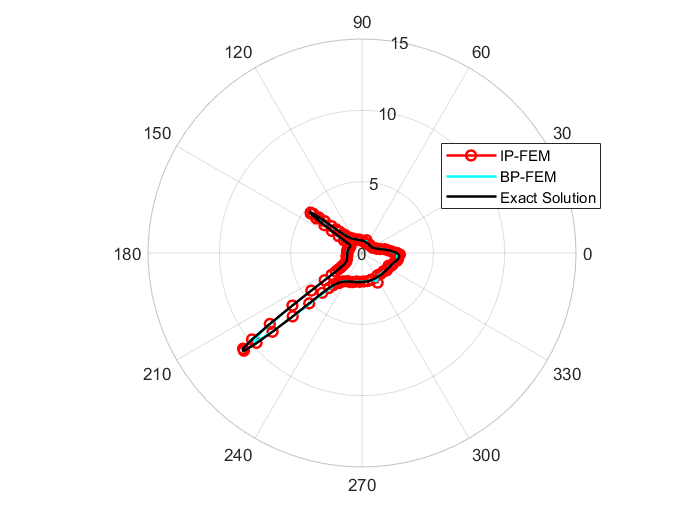}
  \includegraphics[width=0.45\textwidth]{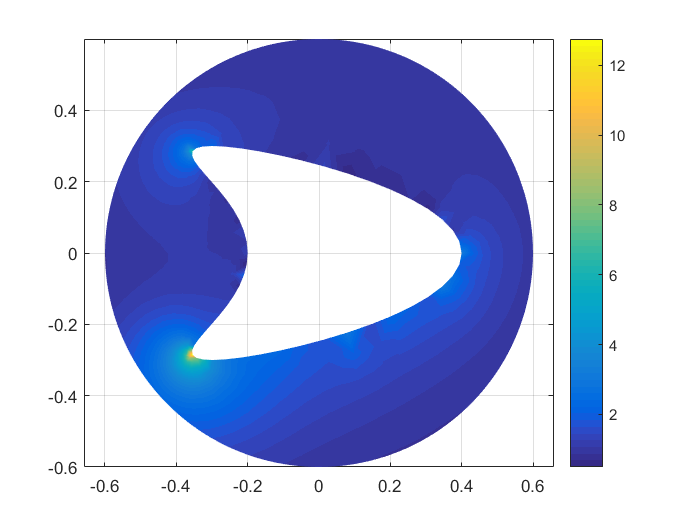}
  \includegraphics[width=0.45\textwidth]{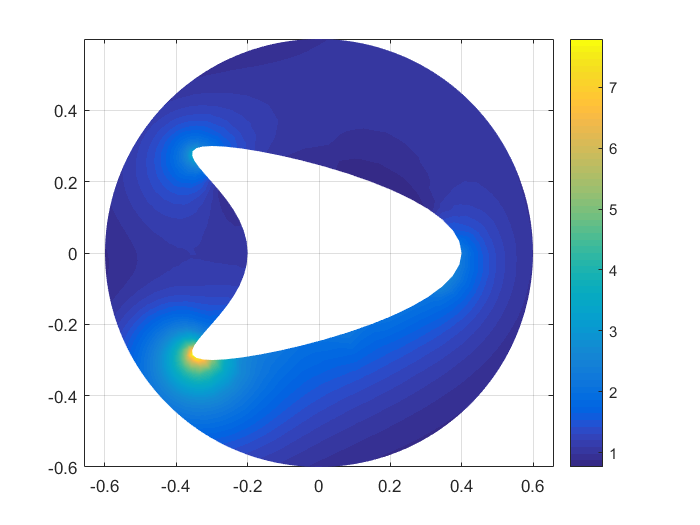}
  \caption{Example 3: The bending moment $w$ on the boundary of cavity and the entire domain:
  (left) the regular linear FEM ($\gamma=0$); (right) the IP-FEM ($\gamma = \kappa\times 10^{-3}$) and the BP-FEM ($\eta = 2.5 \kappa\times 10^{-3}$).}
  \label{Figure_example3_accuracy}
\end{figure}

\subsubsection{Convergence}

In this subsection, we investigate the convergence of the IP-FEM and the BP-FEM. Figure \ref{Figure_Convergence_example3} illustrates the convergence of the relative errors of the $L^2$ norm and the $H^1$ semi-norm for the scattered field $v$ and the bending moment $w$. From these figures, we observe that the convergence rates of $v$ and $w$ also achieve good convergence orders.

\begin{figure}
  \centering
  \includegraphics[width=0.8\textwidth]{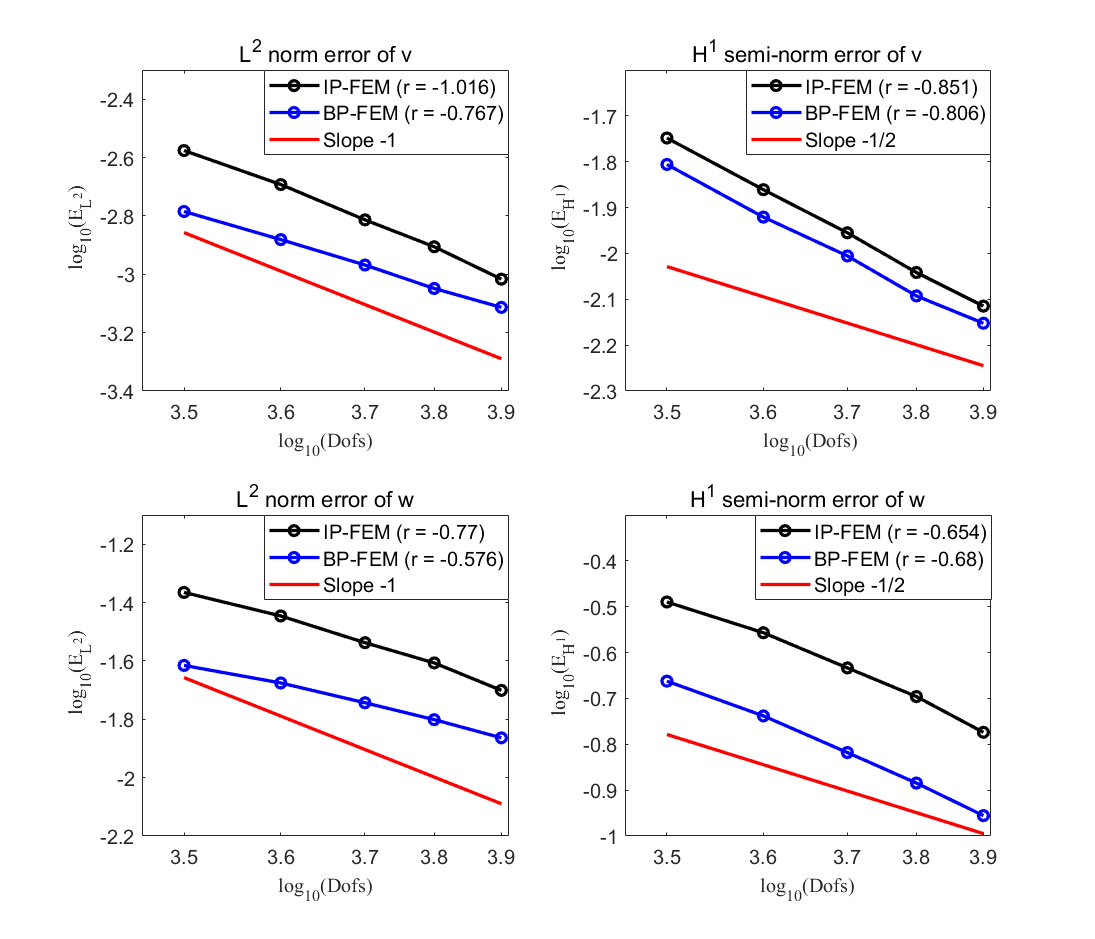}
  \caption{Example 3: The relative errors are plotted against the degrees of freedom (Dofs) for the kite-shaped cavity with $\gamma = \kappa \times 10^{-3}$ for the IP-FEM and with $\eta = 2.5\kappa \times 10^{-3}$ for the BP-FEM.}
  \label{Figure_Convergence_example3}
\end{figure}

\section{Conclusion} \label{sec7}

In this paper, we have introduced and applied the IP-FEM and the BP-FEM to investigate the flexural scattering by a clamped cavity in an infinite thin plate. The proposed model utilizes the decomposition technique and the TBC technique to transform a fourth-order problem on an unbounded domain into two second-order equations with coupled boundary conditions and TBCs on a bounded domain. To effectively suppress the oscillation of the bending moment on the cavity boundary, we have incorporated the interior penalty (IP) and boundary penalty (BP) techniques into the original variational formulation of the decomposed problem. The resulting two new variational formulations, augmented with penalty terms, are discretized using linear triangular elements.

To verify the effectiveness of the proposed method, we conducted a numerical experiment involving flexural scattering by a circle-shaped cavity, for which we obtained an analytical solution. The results of this experiment confirm that both the IP-FEM and the BP-FEM successfully suppress the oscillations of the bending moment on the cavity boundary, leading to a significant improvement in the bending moment, while maintaining the accuracy of the displacement compared to the regular linear FEM. Furthermore, we extended the model to handle flexural scattering problems with cavities of different shapes and compared the results with corresponding reference solutions. The numerical results demonstrated that the convergence rates for the displacement and bending moment achieved by the IP-FEM and the BP-FEM approach optimal convergence orders.

As part of our future work, we aim to conduct further investigations into the existence of decomposed problems using the variational method and delve deeper into the related mathematical theory of the IP-FEM and the BP-FEM and numerical calculations with complex-valued penalty parameters. These research endeavors are expected to significantly contribute to enhancing the understanding and applicability of our proposed numerical methods in the field of flexural scattering and other related problems.

\end{document}